\newtheorem{theorem}{Theorem}[section]    
\newtheorem{lemma}[theorem]{Lemma}          
\newtheorem{proposition}[theorem]{Proposition}
\newtheorem{corollary}[theorem]{Corollary} 
\theoremstyle{definition}
\newtheorem{definition}[theorem]{Definition}
\newtheorem{remark}[theorem]{Remark}
\newtheorem{example}[theorem]{Example}   
\numberwithin{equation}{section}
\newcommand{\F}{\mathcal F_{ob} }
\newcommand{\cF}{\mathcal F_\xi }
\newcommand{\Z}{\mathbb{Z}}
\newcommand{\Aut}{{\rm Aut}}
\begin{document}

\title{Coverings of open books}
\author{Tetsuya Ito}
\address{Research Institute for Mathematical Sciences, Kyoto university, Kyoto, 606-8502, Japan}
\email{tetitoh@kurims.kyoto-u.ac.jp}
\urladdr{http://www.kurims.kyoto-u.ac.jp/~tetitoh/}
\author{Keiko Kawamuro}
\address{Department of Mathematics,   
The University of Iowa, Iowa City, IA 52242, USA}
\email{kawamuro@iowa.uiowa.edu}

\date{\today} 

\subjclass[2000]{Primary 57M25, 57M27; Secondary 57M50}

\keywords{open book foliation, virtually overtwisted contact structure, coverings.}

\begin{abstract}
We study a coverings of open books  and virtually overtwisted contact manifolds using open book foliations. 
We show that open book coverings produces interesting examples such as transverse knots with depth grater than $1$. 
We also demonstrate explicit examples of virtually overtwisted open books.
\end{abstract}

\maketitle

\section{Introduction}

In the classification of contact structures on oriented $3$-manifolds there is a dichotomy between tight and overtwisted contact structures. 
The classification of overtwisted contact structures is reduced to homotopy theory by Eliashberg \cite{E}. 
This is not the case for tight contact structures and study of tight contact structures is an active topic in contact geometry.  
A tight contact structure is called {\em universally tight} if its universal cover is tight, and {\em virtually overtwisted} if it has a {\em finite} cover that is overtwisted. 
As a consequence of the geometrization, the fundamental groups of 3-manifolds are residually finite, which implies that every tight contact structure is either universally tight or virtually overtwisted (cf. \cite{H}). 
Namely, {\em universally} overtwisted is equilvalent to virtually overtwisted.

The idea of coverings plays important roles in many areas of mathematics, including study of contact structures. 
In this note we identify a covering map of contact manifolds with an open book covering map (see Section~\ref{sec2}), and study virtually overtwisted contact manifolds using open book foliations. 
Here is one of the results.

\noindent
{\bf Corollary~\ref{corA}.}
{\em Let $B$ be the binding of an open book $(S,\phi)$. Then the {\em depth} \cite{bo} of the binding is $1$ if and only if $\phi$ is not right-veering.}

In Section~\ref{sec4} we study examples of open books which have interesting properties. 
We give a family of planar open books that supports overtwisted, virtually overtwisted and universally tight contact structures. 
Some non-planar examples are also discussed. 

\begin{proposition}\label{key-example}
Let $S=S_{0, p+q}$ be a sphere with $p+q$ holes, where $p, q\geq 2$. 
Let $\alpha, \beta, \gamma \subset S$ be circles as shown in  Figure~\ref{sphere}. 
Let $\phi \in \Aut(S, \partial S)$ be a diffeomorphism given by 
$$\phi=T \circ {T_\alpha}^{n} \circ T_\beta \circ T_\gamma$$ 
where $T$ is the product of one positive Dehn twist along each of the $p+q$ boundary components and 
$T_\alpha$ is the positive Dehn twist along the curve $\alpha$. 
\begin{enumerate}
\item
If $n \leq -2$ then $(S, \phi)$ supports an overtwisted contact structure.
\item
If $n =-1$ then $(S, \phi)$ supports a virtually overtwisted tight contact structure.
\item
If $n \geq 0$ then $(S, \phi)$ supports a universally tight contact structure. 
\end{enumerate}
\begin{figure}[htbp]
\begin{center}
\SetLabels
(.2*.08) \scriptsize $p-1$\\
(.22*.27) \scriptsize $2$\\
(.22*.35) \scriptsize $1$\\
(.8*.08) \scriptsize $q-1$\\
(.74*.27) \scriptsize $2$\\
(.74*.35) \scriptsize $1$\\
(.5*.8) \scriptsize $\alpha$\\
(.45*.25) \scriptsize $\beta$\\
(.55*.25) \scriptsize $\gamma$\\
\endSetLabels
\strut\AffixLabels{\includegraphics*[height=50mm]{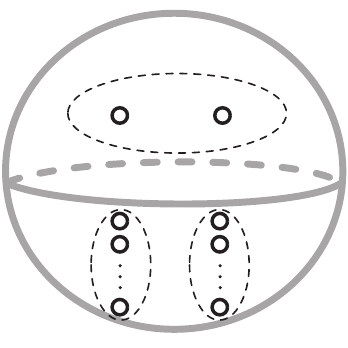}}
\caption{The planar surface $S$ with $p+q$ boundary components.}\label{sphere}
\end{center}
\end{figure}
\end{proposition}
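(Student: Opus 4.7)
The three cases will be handled by three different arguments, unified by the open book foliation techniques emphasized in this paper.

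Case (3) is the easiest. When $n\geq 0$, the monodromy $\phi = T \circ T_\alpha^n \circ T_\beta \circ T_\gamma$ is a product of positive Dehn twists only: the $p+q$ boundary-parallel twists in $T$, the $n$ copies of $T_\alpha$, one $T_\beta$, and one $T_\gamma$. By Giroux (via Loi--Piergallini), such an open book supports a Stein fillable contact structure $(M,\xi)$. Stein fillability pulls back under finite covers: if $\pi\colon \tilde M \to M$ is any finite cover and $(W, J)$ is a Stein filling of $(M,\xi)$, then the cover of $W$ with fundamental group $\pi^{-1}(\pi_1 \tilde M)$ is a Stein filling of $(\tilde M, \pi^*\xi)$. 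Hence every finite cover is tight, and residual finiteness of $\pi_1(M)$ yields universal tightness.

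Case (1) uses the criterion of Honda--Kazez--Matic that an open book with non-right-veering monodromy supports an overtwisted contact structure. The plan is to exhibit a properly embedded arc $\delta \subset S$ with endpoints on a boundary component enclosed by $\alpha$, chosen to be disjoint from $\beta$ and $\gamma$ but intersecting $\alpha$ nontrivially. With $n\leq -2$, the effect of $T_\alpha^n$ drags $\delta$ strongly to the left across $\alpha$, and one can verify that neither the single boundary-parallel twist contributed by $T$ nor the positive twists $T_\beta, T_\gamma$ (which do not interact with $\delta$) can compensate. Thus $\phi(\delta)$ lies to the left of $\delta$ near its endpoints, so $\phi$ fails to be right-veering. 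Alternatively, one can give a direct open-book-foliation construction of a transverse overtwisted disk built from the negative twisting along $\alpha$.

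Case (2) is the heart of the proposition and requires both tightness and virtual overtwistedness. For tightness when $n = -1$, the plan is to check by an explicit arc analysis that $\phi = T \circ T_\alpha^{-1} \circ T_\beta \circ T_\gamma$ is right-veering: the single negative twist along $\alpha$ is absorbed by the boundary twists in $T$ on any arc, because each arc either misses $\alpha$ or meets $\alpha$ and then necessarily runs into a boundary on which $T$ contributes a positive twist larger than the leftward pull of $T_\alpha^{-1}$. This right-veering verification, together with a factorization of $\phi$ as a positive word after absorbing $T_\alpha^{-1}$ using two of the boundary twists in $T$ (a ``lantern-type'' manipulation on the subsurface bounded by $\alpha$), yields a positive factorization and hence Stein fillability, which gives tightness. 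For virtual overtwistedness, the plan is to exploit the evident cyclic symmetry of the configuration $(S,\alpha,\beta,\gamma)$ that permutes the holes on each side of $\alpha$ and to pass to a corresponding finite unbranched cover of the 3-manifold. Using the open book covering framework of Section~\ref{sec2}, this produces a covering open book $(\tilde S, \tilde\phi)$ whose lifted monodromy contains a single lift $T_{\tilde\alpha}^{-1}$ but whose lifted boundary twist contribution is no longer sufficient: the relevant ``$n$-exponent'' for the lifted $\alpha$-curve in the lifted factorization becomes $\leq -2$, reducing virtual overtwistedness to case (1).

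The main obstacle will be Case (2): pinning down the precise covering map, writing the lifted monodromy in terms of lifts of $\alpha,\beta,\gamma$, and verifying that the effective $T_\alpha$-exponent after lifting really does drop into the overtwisted regime of Case (1). The right-veering verification at the borderline value $n=-1$, and the positive factorization needed for tightness, also demand care because the situation is genuinely on the boundary between cases (1) and (3).
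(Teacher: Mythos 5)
Your outline diverges from the paper's argument in two places that are not merely different routes but actual gaps, and in a third place it is speculative where the paper is concrete.

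\textbf{Case (3).} The claim ``Stein fillability pulls back under finite covers'' is false, and the argument hinges on it. For a finite cover $\tilde M\to M$, there is in general no covering $\tilde W \to W$ with $\partial \tilde W = \tilde M$: the map $\pi_1(M)\to\pi_1(W)$ induced by inclusion of the boundary need not be injective (often $\pi_1(W)=1$), so ``the cover of $W$ with fundamental group $\pi^{-1}(\pi_1\tilde M)$'' is not well-defined, and even when some cover of $W$ exists its boundary is generally a \emph{smaller} cover of $M$ than $\tilde M$. Indeed, the Proposition itself is a counterexample to the implication you use: for $n=-1$ the lantern relation gives a positive factorization, hence a Stein fillable $(M,\xi)$, yet assertion (2) says $\xi$ is virtually overtwisted. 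The paper instead cites Example 5.2 of Etnyre--Van Horn-Morris \cite{EV} for a direct universal-tightness argument.

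\textbf{Case (2), virtual overtwistedness.} The proposed mechanism---choosing a cover so that the ``effective $T_\alpha$-exponent'' drops to $\leq-2$ and then quoting Case (1)---does not work, because Dehn twists do not lift that way. If $\alpha$ lifts to $k$ disjoint curves, each mapping homeomorphically, then $T_\alpha^{-1}$ lifts to a product $T_{\tilde\alpha_1}^{-1}\cdots T_{\tilde\alpha_k}^{-1}$ (each exponent still $-1$); if $\alpha$ lifts to a single circle of degree $k$, then $T_\alpha$ lifts to a $1/k$-fractional twist, so the exponent becomes less negative, not more. The paper's proof is genuinely different: it builds explicit double covers by cutting $S$ along arcs chosen according to the parities of $p-1$ and $q-1$ (four cases), lifts $\phi$ using the covering subgroup criterion (explicitly noting that the lift $\tilde\phi$ may not have a convenient Dehn-twist factorization), and then exhibits a transverse overtwisted disk in $(\tilde S,\tilde\phi)$ directly via movie presentations of open book foliations (Figures~\ref{OTcover-odd2}, \ref{OTcover-even2}).

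\textbf{Case (1).} The paper does not argue non-right-veering. It simply applies the construction from the proof of Theorem 4.1 of \cite{ik1-2} to produce a transverse overtwisted disk, and concludes overtwistedness from the violation of the Bennequin--Eliashberg inequality. Your ``alternatively'' remark is the route the paper actually takes; the primary route via non-right-veering is unverified and likely fails near the borderline value $n=-2$, since the boundary twists in $T$ can still make $\phi$ right-veering (the paper's Section~3 discussion on $n(S,\phi)\geq 2$ even suggests this is exactly what can happen). The tightness argument for Case (2) via the lantern relation and a positive factorization does coincide with the paper's.
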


\section{Giroux correspondence and coverings}\label{sec2}
Let $S=S_{g,r}$ be an oriented genus $g$ surface with $r$ boundary components and $\phi \in \Aut(S, \partial S)$ be an orientation preserving diffeomorphism of $S$ fixing the boundary $\partial S$ pointwise. 
The pair $(S, \phi)$ is called an {\em abstract open book} (in this note the adjective ``abstract'' is omitted for simplicity) and $M_{(S, \phi)}$ denotes the closed oriented $3$-manifold obtained by gluing the mapping torus of $\phi$ and solid tori. 
See Etnyre's lecture note \cite{Et} for basics (and more) of open books. 
The Giroux correspondence \cite{G} states that there is a one-to-one correspondence between 
open books (up to positive stabilization) and contact manifolds (up to isotopy). 
We denote by $\xi_{(S, \phi)}$ the (isotopy class of) contact structure on the manifold $M_{(S, \phi)}$ {\em compatible with} (or we often say {\em supported by}) the open book $(S, \phi)$ via the Giroux correspondence.

Throughout this note a covering means a \emph{finite} covering.
Suppose that $\pi:\tilde S \to S$ is a covering map.

\begin{definition}
If there exists a diffeomorphism  $\tilde\phi \in \Aut(\tilde S, \partial\tilde S)$ satisfying $$\pi \circ \tilde\phi = \phi \circ \pi$$ then we call $(\tilde S, \tilde\phi)$ a {\em covering} of the open book $(S, \phi)$.
We write $\pi:(\tilde S, \tilde\phi) \to (S, \phi)$ abusing the notation and call it an {\em open book covering map}. 
$$
\begin{array}{rcl}
\tilde S & \stackrel{\tilde\phi}{\longrightarrow} & \tilde S\\
\pi\downarrow &  & \downarrow\pi\\
S & \stackrel{\phi}{\longrightarrow} & S
   \end{array}
$$   
\end{definition}

\begin{theorem}\label{prop1}
Let $\pi:(\tilde S, \tilde\phi) \to (S, \phi)$ be an open book covering map. 
Then the compatible contact structures for the open books, via the Giroux correspondence \cite{G}, yield a covering map
$$P:(M_{(\tilde S, \tilde\phi)}, \xi_{(\tilde S, \tilde\phi)}) \to (M_{(S, \phi)}, \xi_{(S, \phi)})$$
{\em compatible} with $\pi$, namely the restriction of $P$ to each page $\tilde S_t$  $(t\in[0,1])$ satisfies $P |_{\tilde S_t} = \pi$.
\end{theorem}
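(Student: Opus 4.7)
The plan is to build the covering map $P$ explicitly from the data of $\pi$ and $\tilde\phi$, and then to verify that $P^*\xi_{(S,\phi)}$ is a contact structure compatible with the open book $(\tilde S,\tilde\phi)$, so that Giroux's uniqueness forces it to agree up to isotopy with $\xi_{(\tilde S,\tilde\phi)}$. The intertwining relation $\pi\circ\tilde\phi=\phi\circ\pi$ immediately tells me that the product $\pi\times\mathrm{id}_{[0,1]}:\tilde S\times[0,1]\to S\times[0,1]$ respects the mapping-torus identifications $(x,1)\sim(\tilde\phi(x),0)$ and $(y,1)\sim(\phi(y),0)$, so it descends to an unbranched covering $P_0$ between the two mapping tori, by construction satisfying $P_0|_{\tilde S_t}=\pi$ on every page.

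To extend $P_0$ over the binding neighborhoods I would work one boundary component at a time. Each component $C\subset\partial\tilde S$ covers $\pi(C)\subset\partial S$ with some degree $k_C\geq 1$. In the tubular neighborhood $S^1_\theta\times D^2_{(r,\psi)}$ of the corresponding binding component, the binding direction $\theta$ is the longitude of the solid torus while the open-book angle $\psi$, i.e.\ the mapping-torus coordinate, is the meridian, so on the boundary torus $P_0$ has the form $(\theta,\psi)\mapsto(k_C\theta,\psi)$. This extends unambiguously to a $k_C$-fold unbranched covering $(\theta,r,\psi)\mapsto(k_C\theta,r,\psi)$ of solid tori, which carries binding cores to binding cores. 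Gluing these extensions to $P_0$ along the common boundary tori produces an unbranched covering $P:M_{(\tilde S,\tilde\phi)}\to M_{(S,\phi)}$ whose restriction to every page of $(\tilde S,\tilde\phi)$ is precisely $\pi$.

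To check contact compatibility, let $\alpha$ be a contact form supporting $\xi_{(S,\phi)}$, for instance the one produced by the Thurston--Winkelnkemper construction. Because $P$ is a local diffeomorphism, $P^*\alpha$ is a contact form on $M_{(\tilde S,\tilde\phi)}$. Along the binding $\tilde B=P^{-1}(B)$, positivity of $\alpha$ on the oriented binding $B$ together with $P|_{\tilde B}$ being an orientation-preserving local diffeomorphism yields $(P^*\alpha)|_{\tilde B}>0$. Along each page $\tilde S_t$, the identity $d(P^*\alpha)|_{\tilde S_t}=\pi^*\bigl(d\alpha|_{S}\bigr)$ together with $\pi$ being an orientation-preserving local diffeomorphism shows that the positive area form $d\alpha|_S$ pulls back to a positive area form. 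Hence $P^*\alpha$ supports $(\tilde S,\tilde\phi)$, so by Giroux's theorem $P^*\xi_{(S,\phi)}$ is isotopic to $\xi_{(\tilde S,\tilde\phi)}$, as desired.

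The one subtle point in the argument is correctly identifying the open-book angle, not the page boundary, as the meridian of the solid torus at the binding; with the opposite convention the boundary-torus map $(\theta,\psi)\mapsto(k_C\theta,\psi)$ would extend only to a \emph{branched} cover of solid tori, and the whole construction would fail to produce an honest covering of $3$-manifolds. Once this identification is in place, the remaining verifications reduce to routine facts: that pullback by a local diffeomorphism preserves the contact condition, and that positivity of $\alpha$ on the binding and of $d\alpha$ on pages is preserved under pullback by orientation-preserving local diffeomorphisms.
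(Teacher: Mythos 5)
Your proof is correct and takes essentially the same approach as the paper's: construct $P$ by descending $\pi\times\mathrm{id}_{[0,1]}$ to the mapping tori via the intertwining relation, extend over the binding solid tori, pull back a supporting contact form, check the binding/page positivity conditions, and invoke Giroux uniqueness. The only difference is that you spell out the extension over the binding (and the meridian/longitude bookkeeping that makes it an unbranched cover), a step the paper passes over with ``and then over to the bindings.''
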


\begin{proof}
For simplicity we denote the covering space $(M_{(\tilde S, \tilde\phi)}, \xi_{(\tilde S, \tilde\phi)})$ by $(\tilde M, \tilde\xi)$, and the base space $(M_{(S, \phi)}, \xi_{(S, \phi)})$ by $(M, \xi)$. 
We naturally extends the projection $\pi:\tilde S\to S$ to a map 
$P: \tilde S \times [0,1] \to S \times [0,1]$  
between the product manifolds such that the restriction of $P$ to each page $\tilde S_t (\simeq \tilde S)$ satisfies $P|_{\tilde S_t}=\pi$. 
By the commutativity $\pi \circ \tilde\phi = \phi \circ \pi$ 
we have 
$\phi(S_1) = \phi \circ \pi(\tilde S_1) = \pi \circ \tilde\phi(\tilde S_1) = \pi (\tilde S_0) = S_0$ 
thus the map $P:\tilde S \times [0,1] \to S \times [0,1]$ 
extends to the mapping tori 
$P: (\tilde S \times [0,1])/\tilde\phi \to (S \times [0,1])/\phi$ 
and then over to the bindings. 
Namely, the map $P$ 
induces a covering map $P: \tilde M  \longrightarrow  M$.

Let $\alpha$ be a contact 1-form on $M$ such that $\xi = \ker\alpha$. 
Let $\tilde\alpha := P^*\alpha$ be the pullback of $\alpha$  then $\tilde\alpha\wedge d\tilde\alpha = P^*(\alpha\wedge d\alpha)>0$ and $\ker\tilde\alpha$ gives a contact structure on $\tilde M$ such that $P_*(\ker\tilde\alpha) = \ker\alpha=\xi$. 
This shows that $P:(\tilde M, \ker\tilde\alpha)\to (M,\xi)$ is a covering map. 
We also see that $(\tilde M, \ker\tilde\alpha)$ is supported by the open book $(\tilde S, \tilde\phi)$, that is, 
$\tilde\alpha>0$ on the binding of the open book $(\tilde S, \tilde\phi)$ and $d\tilde\alpha>0$ on each page $\tilde S_t$.  Thus the Giroux correspondence implies that $(\tilde M, \ker\tilde\alpha)$ and $(\tilde M, \tilde \xi)$ are isotopic, and the map $(\tilde M, \tilde\xi)\stackrel{P}{\to} (M,\xi)$ is a covering map. 
\end{proof}

Conversely, we have the following. 

\begin{theorem}\label{thm2}
Let $P:(\tilde M, \tilde\xi)\to(M,\xi)$ be a covering map for contact manifolds. 
For every open book $(S,\phi)$ supporting $(M,\xi)$ there exists an open book $(\tilde S, \tilde\phi)$ supporting $(\tilde M, \tilde\xi)$ and giving an open book covering map $\pi:(\tilde S, \tilde\phi)\to (S, \phi)$ compatible with $P$. 
\end{theorem}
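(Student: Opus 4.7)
The plan is to \emph{pull back} the open book on $M$ through the covering $P$ to produce the desired open book on $\tilde{M}$, and then to use the pullback of a contact form to match the contact structure with the page-and-binding data, exactly as in the proof of Theorem~\ref{prop1}. Thus the argument is formally dual to that of Theorem~\ref{prop1}: there we started with an open book upstairs and derived one downstairs, whereas here we do the reverse.

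Concretely, let $B \subset M$ be the binding of $(S,\phi)$ and set $\tilde{B} := P^{-1}(B)$. Since $P$ is a finite covering and $B$ is a disjoint union of circles, $\tilde{B}$ is also a disjoint union of finitely many circles, and $P$ restricts to a covering between tubular neighborhoods of $\tilde{B}$ and $B$. Let $\pi_{OB}\colon M \setminus B \to S^1$ be the open book fibration, and set $\tilde{\pi} := \pi_{OB} \circ P \colon \tilde{M} \setminus \tilde{B} \to S^1$. As the composition of a covering with a locally trivial fibration, $\tilde{\pi}$ is again a locally trivial fibration, and near $\tilde{B}$ its local model is the pullback by a local diffeomorphism of the standard binding model of $\pi_{OB}$; hence $(\tilde{B}, \tilde{\pi})$ gives an open book structure on $\tilde{M}$ whose page is $\tilde{S} := P^{-1}(S)$ (possibly disconnected). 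The restriction $\pi := P|_{\tilde{S}}\colon \tilde{S} \to S$ is a covering of surfaces. Trivializing the mapping-torus structure of $\tilde{\pi}$, using that the pullback of the cylinder $S \times [0,1]$ by a covering is $\tilde{S}\times[0,1]$ since $[0,1]$ is contractible, yields a monodromy $\tilde{\phi} \in \Aut(\tilde{S},\partial \tilde{S})$; comparing the identifications of $\tilde{S} \times [0,1]$ and $S \times [0,1]$ inside the commutative diagram for $P$ gives the desired equality $\pi\circ\tilde{\phi} = \phi\circ\pi$ directly.

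For the contact compatibility I would mirror the argument of Theorem~\ref{prop1}: pick a contact form $\alpha$ on $M$ with $\ker \alpha = \xi$ compatible with $(S,\phi)$, and set $\tilde{\alpha} := P^{*}\alpha$. Since $P$ is a contact covering, $\tilde{\xi} = \ker\tilde{\alpha}$; since $P$ is a local diffeomorphism, $\tilde{\alpha} > 0$ on $\tilde{B}$ and $d\tilde{\alpha} = P^{*} d\alpha$ is positive on each page $\tilde{S}_t$. Hence $\tilde{\xi}$ is supported by $(\tilde{S}, \tilde{\phi})$, and by construction $\pi\colon(\tilde{S},\tilde{\phi})\to(S,\phi)$ is an open book covering map compatible with $P$, completing the proof.

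The main technical nuisance lies in the monodromy step: arranging $\tilde{\phi}$ to restrict to the identity on $\partial \tilde{S}$ and to satisfy $\pi\circ\tilde{\phi} = \phi\circ\pi$ on the nose rather than merely up to isotopy rel boundary forces one to pick compatible collar trivializations of $B$ and $\tilde{B}$. A second, minor point is that $\tilde{S}$ and $\tilde{B}$ need not be connected even when $S$ and $B$ are, but this causes no essential difficulty for the open book structure.
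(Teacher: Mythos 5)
Your proposal is correct and follows essentially the same route as the paper's own proof: pull back the binding and pages via $P$ (so $\tilde{B}=P^{-1}(B)$, $\tilde{S}_t=P^{-1}(S_t)$), trivialize the resulting mapping torus to extract $\tilde{\phi}$ with $\pi\circ\tilde{\phi}=\phi\circ\pi$, and then verify contact compatibility by pulling back a supporting contact form exactly as in Theorem~\ref{prop1}. You spell out a few details the paper leaves implicit (the fibration $\tilde{\pi}=\pi_{OB}\circ P$, the local model near the binding, the collar normalization so that $\tilde{\phi}$ is the identity on $\partial\tilde{S}$ and the commutativity holds on the nose), but the underlying argument is the same.
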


\begin{proof}
Let $S_t$ ($t\in[0,1]$) denote the pages of the open book decomposition $(S, \phi)$ of $M$. 
Let $\tilde S_t := P^{-1}(S_t)$ and $\tilde B = P^{-1}(B)$, where $B \subset M$ is the binding for $(S,\phi)$. 
All the $\tilde S_t$ have the same topological type, denoted by $\tilde S$, and $P$ induces a covering map $\pi:\tilde S\to S$. 
There exists $\tilde\phi \in \Aut(\tilde S, \partial\tilde S)$ such that: 
$$
\tilde M\setminus\tilde B \simeq 
(\tilde S\times[0,1]) / (x,1) \sim (\tilde\phi(x), 0)
$$
Since the pages $S_0$ and $S_1$ are identified under $\phi$ the commutativity $\pi \circ \tilde\phi=\phi\circ\pi$ holds. 
Thus we get an open book covering map $\pi:(\tilde S, \tilde\phi)\to (S, \phi)$ compatible with $P$. 

By the same argument as in the proof of Proposition~\ref{prop1} we can show that $(\tilde S, \tilde\phi)$ supports the contact manifold $(\tilde M, \tilde\xi)$. 
\end{proof}

\begin{remark}
For a covering map of contact 3-manifolds $P:(\tilde M, \tilde \xi) \rightarrow (M,\xi)$, not every open book decomposition $(\tilde S, \tilde \phi)$ of $(\tilde M, \tilde \xi)$ arises as an open book covering compatible with $P$. 
\end{remark}
To see this statement we recall the following simple fact, which easily follows from the definition of right-veering diffeomorphisms \cite{HKM}.
\begin{lemma}
\label{lemma:coverrv}
Let $\pi:(\tilde S, \tilde \phi) \rightarrow (S,\phi)$ be an open book covering map. Then $\phi$ is right-veering if and only if $\tilde \phi$ is right-veering.
\end{lemma}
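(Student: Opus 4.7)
My plan exploits three facts: (a) both $\phi$ and $\tilde\phi$ fix their boundaries pointwise; (b) $\pi$ is a local orientation-preserving diffeomorphism at every boundary point; and (c) the commutativity $\pi\circ\tilde\phi=\phi\circ\pi$. Together these let me compare right-veering of $\phi$ and of $\tilde\phi$ at corresponding boundary points.

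For the direction ``$\tilde\phi$ right-veering $\Rightarrow$ $\phi$ right-veering'', I would let $\alpha\subset S$ be a properly embedded arc with $\alpha(0)=p\in\partial S$, and let $\tilde\alpha\subset\tilde S$ be its path-lift starting at some $\tilde p\in\pi^{-1}(p)$. Since $\tilde\phi$ fixes $\tilde p$ and covers $\phi$, the arc $\tilde\phi(\tilde\alpha)$ is the path-lift of $\phi(\alpha)$ from $\tilde p$. The right-veering hypothesis gives $\tilde\phi(\tilde\alpha)\geq\tilde\alpha$ at $\tilde p$, and applying the orientation-preserving isomorphism $d\pi_{\tilde p}$ transfers this to $\phi(\alpha)\geq\alpha$ at $p$.

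For the converse, naively projecting an arc $\tilde\alpha\subset\tilde S$ down to $S$ yields only an immersed arc, so I would instead pass to the common universal cover $U$ of $S$, which is also the universal cover of $\tilde S$. Write $q\colon U\to S$ and $\tilde q\colon U\to\tilde S$ with $q=\pi\circ\tilde q$. Given $\tilde p\in\partial\tilde S$, fix a lift $p^\infty\in\partial U$ over $\tilde p$ (hence also over $p=\pi(\tilde p)$). Because $\tilde\phi$ is itself a lift of $\phi$ through $\pi$, the unique lift of $\tilde\phi$ to $U$ fixing $p^\infty$ coincides with the unique lift of $\phi$ fixing $p^\infty$; call this common map $\Phi^\infty$. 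By the standard universal-cover reformulation of right-veering from \cite{HKM}, both ``$\phi$ is right-veering at $p$'' and ``$\tilde\phi$ is right-veering at $\tilde p$'' reduce to the identical condition that $\Phi^\infty$ moves no boundary point of $U$ visible from $p^\infty$ to the left of $p^\infty$ in the cyclic order on $\partial U$. Since every $p\in\partial S$ arises as $\pi(\tilde p)$ for some $\tilde p\in\partial\tilde S$ and conversely, the equivalence of the global conditions follows.

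The main subtlety to address is that the order relation $\ge$, originally defined via intersection minimization, is intrinsic to the isotopy class and can be read off the universal cover without such minimization. A pair of arcs minimizing intersections in $\tilde S$ need not project to a minimizing pair in $S$, so transferring tangent-direction comparisons directly under $\pi$ is not a priori valid. Passing through $U$ sidesteps this issue, since there the cyclic order on $\partial U$ replaces intersection minimization entirely and makes the identity $\Phi^\infty=\tilde\phi^\infty=\phi^\infty$ the only thing one needs to check.
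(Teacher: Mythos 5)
Your argument is essentially correct and supplies a proof the paper leaves implicit (the paper merely asserts the lemma ``easily follows from the definition'' of right-veering and cites \cite{HKM}). The universal-cover argument is the right tool, and in fact it handles both directions at once: for $p\in\partial S$ and any $\tilde p\in\pi^{-1}(p)$, the unique lift $\Phi^\infty$ of $\phi$ to the common universal cover $U$ fixing a chosen $p^\infty$ over $\tilde p$ coincides with the unique lift of $\tilde\phi$ fixing $p^\infty$ --- your verification via $q=\pi\circ\tilde q$ is correct, since both $\tilde q\circ\Phi^\infty$ and $\tilde\phi\circ\tilde q$ lift $\phi\circ q$ through $\pi$ and agree at $p^\infty$. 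Hence the circular-order criterion of \cite{HKM} on $\partial U$ reads identically for $\phi$ at $p$ and for $\tilde\phi$ at $\tilde p$, and since $\pi$ restricts to a surjection $\partial\tilde S\to\partial S$ the two global right-veering conditions are equivalent. The tangent-vector sketch you give for the forward direction, however, has precisely the gap you flag in your final paragraph: a pair of arcs isotoped to efficient position in $\tilde S$ need not project to an efficient pair in $S$, and the relation $\leq$ is only read off tangent directions after such minimization, so $d\pi_{\tilde p}$ does not immediately transport the comparison. The clean repair is to run that direction through $U$ exactly as you do for the converse; alternatively one can fix a hyperbolic metric on $S$, pull it back to $\tilde S$, and use geodesic representatives, which are automatically efficient, lift to geodesics, and make the tangent-vector comparison under $d\pi_{\tilde p}$ legitimate.
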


Now consider the case that $(M,\xi)$ is tight and $(\tilde M, \tilde \xi) $ is overtwisted. 
Then by \cite[Theorem 1.1]{HKM} there is an open book decomposition $(\tilde S, \tilde \phi)$ of $(\tilde M, \tilde \xi)$ such that $\tilde \phi$ is not right-veering. 
On the other hand, since $(M,\xi)$ is tight every open book $(S,\phi)$ of $(M,\xi)$ has right-veering $\phi$.
Hence Lemma \ref{lemma:coverrv} shows that the non-rightveering open book $(\tilde S, \tilde \phi)$ cannot cover $(S,\phi)$.

\section{The overtwisted complexity, depth of bindings and open book coverings}\label{sec3}

In this section we study properties of open book coverings using the notion of {\em right-veeringness} \cite{HKM} and the open book foliation method \cite{ik1-1}.

Let us recall the {\em overtwisted complexity} $n(S, \phi)$ introduced in \cite[Definition 6.4]{ik2}.
It is a non-negative integer given by:
$$n(S, \phi) = \min\left\{
e_-(\F(D)) \ | \ 
D \mbox{ is a transverse overtwisted disk in } (S, \phi)\right\}, 
$$
if $(S, \phi)$ supports an overtwisted contact structure, and  $n(S, \phi)=0$ otherwise.
Here $e_-(\F(D))$ denotes the number of negative elliptic points in the open book foliation on $D$. 
See Definition 4.1 of \cite{ik1-1} for the definition of a {\em transverse overtwisted disk}, which can be understood as a transverse push-off of a usual overtwisted disk, or, the spanning disk of a transverse unknot $K$ with $sl(K)=+1$. 
 
The following property is proved in \cite{ik2}. 
\begin{proposition}\label{prop:n}
\cite[Corollary 6.5]{ik2} 
\begin{enumerate}
\item
$n(S, \phi) = 0$ if and only if $\xi_{(S, \phi)}$ is tight (and hence $\phi$ is right veering). 
\item 
$n(S, \phi) = 1$ if and only if $\xi_{(S, \phi)}$ is overtwisted and $\phi$ is not right veering. 
\item 
$n(S, \phi) \geq 2$ if and only if $\xi_{(S, \phi)}$ is overtwisted and $\phi$ is right veering. 
\end{enumerate}
\end{proposition}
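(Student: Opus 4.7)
My plan is to prove the three items together by combining the definition of $n(S,\phi)$, the Honda--Kazez--Mati\'c characterization of tight contact structures via right-veering monodromies, and a structural analysis of the open book foliation on a transverse overtwisted disk.

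Item (1) is essentially by convention in the definition of $n(S,\phi)$: if $\xi_{(S,\phi)}$ is tight then $n(S,\phi)=0$ by the definition, and the parenthetical right-veeringness of $\phi$ is exactly \cite[Theorem 1.1]{HKM} applied to the tight open book $(S,\phi)$. For the converse, it suffices to prove that if $\xi_{(S,\phi)}$ is overtwisted then every transverse overtwisted disk $D$ satisfies $e_-(\F(D)) \geq 1$. Here I would invoke the structural constraints on the open book foliation on $D$: since $\partial D$ is a transverse unknot with $sl(\partial D) = +1$, the self-linking formula written in terms of elliptic and hyperbolic singularity counts \cite{ik1-1}, together with the disk topology of $D$, rules out the possibility $e_-(\F(D)) = 0$.

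The heart of the proof is item (2), which I would prove in both directions. For the ``if'' direction, assume $\xi_{(S,\phi)}$ is overtwisted and $\phi$ is not right-veering, so there is a properly embedded arc $\alpha \subset S$ with a shared boundary endpoint $p$ at which $\phi(\alpha)$ lies strictly to the left of $\alpha$. Running $\alpha$ and $\phi(\alpha)$ through the mapping-torus construction of $M_{(S,\phi)}$, the local bigon they span near $p$ extends to an embedded disk $D \subset M_{(S,\phi)}$ whose open book foliation has exactly one negative elliptic point (at $p$), together with positive elliptic points and hyperbolic points in the standard bigon configuration. A direct check shows that $D$ is a transverse overtwisted disk in the sense of \cite[Definition 4.1]{ik1-1} with $sl(\partial D)=+1$, so $n(S,\phi) \leq 1$; combined with the lower bound from item (1) this gives $n(S,\phi)=1$. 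For the ``only if'' direction, I would start with a transverse overtwisted disk $D$ realizing $n(S,\phi)=1$, let $p$ be its unique negative elliptic point, and read off from the leaves of $\F(D)$ emanating from $p$ an arc on a fixed page whose image under $\phi$ lies strictly to the left at $p$, directly witnessing the non-right-veeringness of $\phi$.

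Item (3) then follows formally from (1) and (2): given that $\xi_{(S,\phi)}$ is overtwisted, the condition $n(S,\phi) \geq 2$ is the negation of both $n(S,\phi)=0$ (handled by (1)) and $n(S,\phi)=1$ (handled by (2)). The step I expect to be the main obstacle is the ``only if'' direction of item (2), namely extracting a concrete monodromy-violating arc from a transverse overtwisted disk with a single negative elliptic point, since this requires a careful analysis of the local sectors and leaves of $\F(D)$ near $p$ and their identification with $\alpha$ and $\phi(\alpha)$ on a chosen page. Once this local dictionary between a negative elliptic point and a non-right-veering arc is in place, the remaining assertions reduce to local bigon bookkeeping, and the proposition follows.
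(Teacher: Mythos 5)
The paper does not prove Proposition~\ref{prop:n}; it is imported verbatim from \cite[Corollary 6.5]{ik2} with no in-paper argument. So there is no internal proof to compare against, and what you are really attempting is a reconstruction of the cited result.

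As a reconstruction, your outline is sound and consistent with the open book foliation machinery of \cite{ik1-1, ik1-2, ik2}. A few specific remarks. For the nontrivial direction of (1), your appeal to the self-linking formula does work and can be made completely explicit: for a transverse overtwisted disk $D$ one has $sl(\partial D)= -(e_+-e_-)+(h_+-h_-)=1$ and $\chi(D)=(e_++e_-)-(h_++h_-)=1$; adding these gives $e_--h_-=1$, so $e_-\geq 1$, and in fact every transverse overtwisted disk has exactly one more negative elliptic point than negative hyperbolic point. This also tells you that $n(S,\phi)=1$ forces $h_-=0$, which is exactly the rigidity you want when analyzing $\F(D)$ near the unique negative elliptic point in the ``only if'' direction of (2). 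The ``if'' direction of (2) is essentially the construction in \cite[Theorem 4.1]{ik1-2}, and your derivation of (3) from (1) and (2) is a correct formal manipulation.

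The genuine content --- which you correctly flag --- is extracting a non-right-veering arc from a transverse overtwisted disk with $e_-=1$. Your plan (use the $h_-=0$ structure to track the family of $b$-arcs through the negative elliptic point as $t$ runs over $[0,1]$, and read off left-veering behavior from the gluing $(x,1)\sim(\phi(x),0)$) is the right idea, but it is a real argument, not a local bookkeeping step: you need to control how the $b$-arc fan at the negative elliptic point interacts with the positive hyperbolic singularities, and you need the monotonicity of the rotation of $b$-arcs around a negative elliptic point. This is the analysis carried out in \cite{ik2} (and closely parallels the right-veering/open book foliation dictionary developed in \cite{ik1-1}). Your sketch is plausible but would need those details filled in to be a proof; as written it is an outline at exactly the point where the theorem has its content. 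Since the paper itself offers no proof, there is no gap relative to the paper --- but you should not present the ``only if'' direction of (2) as routine.
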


As a consequence we can show the following:

\begin{proposition}\label{Cor65}
Let $\pi: (\tilde S, \tilde\phi)\to(S,\phi)$ be an open book covering such that $n(\tilde S, \tilde\phi)=1$ then $(S, \phi)$ supports an overtwisted contact structure.
\end{proposition}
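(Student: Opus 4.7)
The plan is to chain together the three results the paper has just introduced: the characterization of the overtwisted complexity in Proposition~\ref{prop:n}, the covering-invariance of right-veeringness in Lemma~\ref{lemma:coverrv}, and the hypothesis $n(\tilde S,\tilde\phi)=1$.

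First, from $n(\tilde S,\tilde\phi)=1$ and part (2) of Proposition~\ref{prop:n}, I would read off immediately that $\tilde\phi$ is \emph{not} right-veering (and that $\xi_{(\tilde S,\tilde\phi)}$ is overtwisted, though only the right-veering conclusion is needed here). Next, I would invoke Lemma~\ref{lemma:coverrv} for the open book covering map $\pi:(\tilde S,\tilde\phi)\to(S,\phi)$: since right-veeringness is preserved and reflected under open book coverings, the fact that $\tilde\phi$ is not right-veering forces $\phi$ itself to be not right-veering.

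Finally, I would apply the contrapositive of part (1) of Proposition~\ref{prop:n}: if $\xi_{(S,\phi)}$ were tight, then $n(S,\phi)=0$ and $\phi$ would necessarily be right-veering, contradicting the previous step. Hence $\xi_{(S,\phi)}$ must be overtwisted, which is exactly the statement of the proposition.

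There is really no obstacle here; the statement is a short corollary of the cited results, and the only thing to be careful about is to use Proposition~\ref{prop:n} in the correct direction (reading off non-right-veeringness of $\tilde\phi$ from $n=1$ upstairs, and then ruling out tightness of $\xi_{(S,\phi)}$ downstairs from non-right-veeringness of $\phi$). One could alternatively phrase the argument as: Proposition~\ref{prop:n} shows that whenever $(S,\phi)$ supports a tight contact structure, $\phi$ is right-veering, and Lemma~\ref{lemma:coverrv} would then force $\tilde\phi$ to be right-veering too, contradicting $n(\tilde S,\tilde\phi)=1$.
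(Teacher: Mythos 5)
Your proof is correct and uses exactly the same ingredients as the paper's: Proposition~\ref{prop:n}, Lemma~\ref{lemma:coverrv}, and the fact that tightness forces right-veeringness. The paper phrases it as a proof by contradiction (assume $(S,\phi)$ tight, deduce $\tilde\phi$ right-veering, then invoke part (3) to get $n(\tilde S,\tilde\phi)\geq 2$), while you run the implication in the forward direction (from $n=1$ read off that $\tilde\phi$ is not right-veering, push that down to $\phi$, then rule out tightness); these are logically the same argument.
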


\begin{proof}
Suppose that $(S,\phi)$ supports a tight contact structure. 
Then $\phi$ is right-veering for every boundary component of $S$. 
By Lemma~\ref{lemma:coverrv} $\tilde\phi$ is also right-veering for every boundary component of $\tilde S$. 
The property (3) of Proposition~\ref{Cor65} implies that $n(\tilde S, \tilde\phi)\geq 2$, which is a contradiction. 
\end{proof}

The overtwisted complexity is closely related to the depth of transverse knots or links introduced by Baker and Onaran in \cite{bo}: 
The \emph{depth} of a transverse knot or link\footnote{As mentioned in Remark 5.2.4 of \cite{bo} the depth can be defined for links though it is originally defined for knots.} 
$K$ in an overtwisted contact 3-manifold $(M,\xi)$ is defined by 
\[ 
d(K)= \min \{ |D \cap K| \: | \: D \text{ is an overtwisted disk in } (M,\xi)   \} 
\]
and $K$ is called \emph{non-loose} if $d(K)>0$, that is, $\xi$ is tight on ${M\setminus K}$. 

\begin{theorem}\label{thm:d=n}
Let $B$ be the binding of an open book $(S,\phi)$ supporting an overtwisted contact structure.
If $(S, \phi)$ supports an overtwisted contact structure then $d(B)=n(S,\phi)$. 
\end{theorem}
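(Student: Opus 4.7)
The plan is to establish the two inequalities $d(B) \le n(S,\phi)$ and $n(S,\phi) \le d(B)$ separately, linking the two minima via a back-and-forth between genuine overtwisted disks and transverse overtwisted disks.

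For $d(B) \le n(S,\phi)$, I would take a transverse overtwisted disk $D$ realizing $n(S,\phi)$. By the definition of a transverse overtwisted disk in \cite{ik1-1}, every elliptic point of $\F(D)$ is negative, and the identification of elliptic points with binding intersections gives $|D \cap B| = e_-(\F(D)) = n(S,\phi)$. The boundary $\partial D$ is a transverse unknot with $sl(\partial D) = +1$; a Legendrian approximation $L$ with $tb(L) = 0$ bounds a disk $D'$ that agrees with $D$ away from a collar of $\partial D$, and $D'$ can be put in standard Eliashberg overtwisted form by a $C^0$-small perturbation in a neighborhood of $\partial D'$. Since both modifications occur in a neighborhood disjoint from $B$, we have $|D' \cap B| = |D \cap B| = n(S,\phi)$, proving $d(B) \le n(S,\phi)$.

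For $n(S,\phi) \le d(B)$, I would start with an overtwisted disk $D_0$ with $|D_0 \cap B| = d(B)$. After a small perturbation $D_0$ is in general position with respect to the open book, and carries an open book foliation $\F(D_0)$ whose elliptic points are exactly the points of $D_0 \cap B$; in particular $e_-(\F(D_0)) \le |D_0 \cap B| = d(B)$. Applying the operations in \cite{ik1-1,ik2} that convert a disk witnessing overtwistedness into a transverse overtwisted disk (elimination of positive singularities by bypass-type moves and $b$-arc foliation changes), I would produce a transverse overtwisted disk $D$ from $D_0$ in such a way that the count of negative elliptic points does not increase. Combining inequalities yields $n(S,\phi) \le e_-(\F(D)) \le e_-(\F(D_0)) \le d(B)$.

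\emph{Main obstacle.} The delicate point is the reduction step in the second inequality: guaranteeing that transforming $D_0$ into transverse overtwisted form does not increase $e_-$. Positive and negative elliptic points count oppositely signed binding intersections, and the foliation-change moves of \cite{ik1-1} can, in general, both create and cancel elliptic points of either sign. One must argue that the particular sequence of moves used here only removes positive singularities (elliptic and hyperbolic), never converts a negative binding intersection into a positive one, and never creates new negative elliptic points. A case-by-case bookkeeping of how each of the standard moves affects $e_-$ should settle this; since the obstructions to tightness are detected by positive singularities, the reduction to transverse overtwisted form should indeed be monotone in $e_-$.
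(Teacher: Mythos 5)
Your proof of the inequality $d(B)\le n(S,\phi)$ rests on the claim that every elliptic point of a transverse overtwisted disk is negative, so that $|D\cap B| = e_-(\F(D)) = n(S,\phi)$. This is false: by \cite[Definition 4.1]{ik1-1} a transverse overtwisted disk has a Giroux graph $G_{++}$ that is a circle enclosing the tree $G_{--}$, so it necessarily has positive elliptic points (as well as negative ones); the explicit examples in Figures~\ref{OTcover-odd2} and \ref{OTcover-even2} of this paper have two negative and four positive elliptic points. Since $|D\cap B| = e_+(\F(D)) + e_-(\F(D))$, your argument only gives $d(B)\le e_+ + e_-$, which may be much larger than $n(S,\phi)=e_-$. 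The actual proof has to work to discard the positive intersections: the paper perturbs $B$ to a transverse link $B'$ positioned so that all positive intersections sit in the outer annulus $A$ between $G_{++}$ and $\partial D_{\sf trans}$ and all negative ones miss $G_{--}$, then applies the Giroux elimination lemma to collapse the disk to a standard overtwisted sub-disk $D$ lying inside $G_{++}$ and hence avoiding $A$; this gives $|B'\cap D|=|B'\cap^- D| = n(S,\phi)$ precisely because the positive intersections were expelled to $A$. This geometric reduction is the content of the theorem and cannot be finessed as you propose.

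Your argument for $n(S,\phi)\le d(B)$ is closer to the paper's, but it is incomplete in exactly the place you flag as the ``main obstacle,'' and it also skips a needed step. An overtwisted disk has Legendrian boundary, not transverse boundary, so it does not directly sit in the framework of transverse overtwisted disks or closed braids; the paper first passes to the positive transverse push-off $K$ of $\partial D$, giving a transverse unknot with $sl(K)=1$ whose spanning disk still meets $B$ in $d(B)$ points. It then invokes Pavelescu's proof of the Alexander theorem \cite[Theorem 3.2]{pav} to braid $K$ \emph{without introducing new negative intersections} of the disk with $B$, and finally feeds the braided disk into the construction of \cite[Theorem 4.3]{ik1-1} to extract a transverse overtwisted disk with at most $d(B)$ negative elliptic points. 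These two concrete references are precisely the content your proposal leaves as an unverified claim, so as written the argument has a genuine gap rather than a mere bookkeeping chore.
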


\begin{proof}
Let $D_{\sf trans}$ be a transverse overtwisted disk realizing $n(S,\phi)$, that is, the open book foliation $\F(D_{\sf trans})$ has $n(S,\phi)$ negative elliptic points.

Let $(B, \pi)$ be an open book decomposition of $M_{(S, \phi)}$ that is determined by the abstract open book $(S, \phi)$.  
By \cite[Theorem 2.21]{ik1-1} we may choose a contact structure $\xi$ supported by $(B, \pi)$ such that the characteristic foliation $\cF(D_{\sf trans})$ and the open book foliation $\F(D_{\sf trans})$ are topologically conjugate. 
Moreover we may assume that 
the set of positive/negative elliptic points of $\cF(D_{\sf trans})$ coincides exactly with the set of positive/negative elliptic points of $\F(D_{\sf trans})$.

Recall that a positive/negative elliptic point of the open book foliation on a surface $F$ is just a positive/negative intersection point of $F$ and the binding $B$. 
With this in mind we denote by $B \cap^{\pm}  D_{\sf trans}$ the set of $\pm$-intersection points of $D_{\sf trans}$ and $B$.

Let $B'$ be a transverse link that is obtained from $B$ by transverse isotopy {\em only near} the 
intersection points $B \cap D_{\sf trans}$ so that
\begin{itemize}

\item
$|B' \cap^+ D_{\sf trans}| = |B \cap^+ D_{\sf trans}|$ 
and 
$(B' \cap^+ D_{\sf trans}) \subset A$
\item 
$|B' \cap^- D_{\sf trans}| = |B \cap^- D_{\sf trans}|$
and $B' \cap  G_{--}(\cF(D_{\sf trans})) = \emptyset$
\end{itemize}
where $A \subset D_{\sf trans}$ is the annulus bounded by the graph $G_{++}(\cF(D_{\sf trans}))$ and the boundary $\partial D_{\sf trans}$, and 
$G_{++}(\cF(D_{\sf trans}))$ (resp. $G_{--}(\cF(D_{\sf trans}))$) is the Giroux graph in the characteristic foliation consisting of positive (resp. netagive) elliptic points and stable (resp. unstable) separatrices of positive (resp. negative) hyperbolic points (see \cite[page~646]{G} and \cite[Definition 2.17]{ik1-1}). 
Since the two foliations $\cF(D_{\sf trans})$ and $\F(D_{\sf trans})$ are topologically conjugate the graphs $G_{\pm\pm}(\cF(D_{\sf trans}))$ and $G_{\pm\pm}(\F(D_{\sf trans}))$ are topologically conjugate.  
By the definition of a transverse overtwisted disk \cite[Definition 4.1]{ik1-1} the graph $G_{--}$ is a tree and $G_{++}$ is a circle enclosing $G_{--}$. See Figure \ref{fig:transtousual}, where $G_{--}(\cF(D_{\sf trans}))$ and $G_{++}(\cF(D_{\sf trans}))$ are depicted by the gray and the black bold arcs, respectively.  
\begin{figure}[htbp]
\begin{center}
\SetLabels
(0*.05) $\cF(D_{\sf trans})$\\
(.75*.66) $D$\\
(.17*.85) $+$\\
(.1*.23) $G_{++}$\\
(.2*.55) $-$\\
(.2*.45) $G_{--}$\\
(1*.05) $\cF(D_{\sf trans}')$\\
(.33*.85) $A$\\
\endSetLabels
\strut\AffixLabels{\includegraphics*[height=52mm]{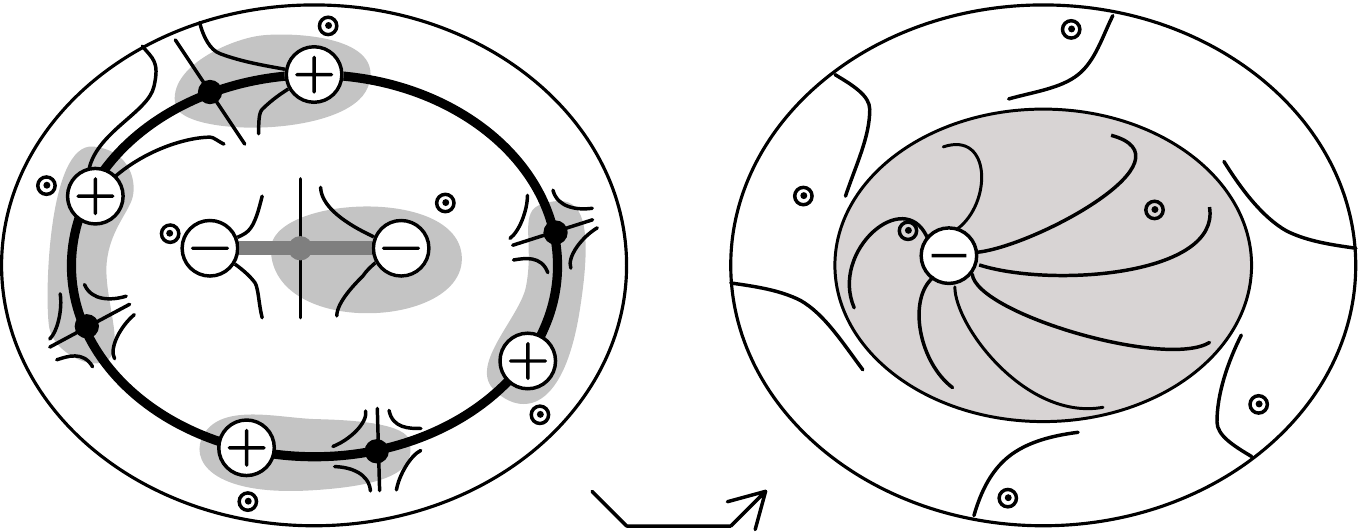}}
\caption{Giroux elimination lemma is applied to the gray regions in the left disk. 
The dots $\odot$ represent the intersection points  $B' \cap D_{\sf trans}$ and $B' \cap D'_{\sf trans}$.}
\label{fig:transtousual}
\end{center}
\end{figure}

Note that $B'$ is not used as a binding but it is just a transverse link. 
We also keep using the same contact structure $\xi$, hence the characteristic foliation $\cF(D_{\sf trans})$ does not change.

We apply the {\em Giroux elimination lemma} \cite[Lemma 3.3]{Gconvex} to small 3-ball neighborhoods (gray regions in Figure~\ref{fig:transtousual}) of $G_{\pm\pm} (\cF(D_{\sf trans}))$ each of which contains a pair of consecutive elliptic and hyperbolic points (of the same sign) and is disjoint from $B'$. 
We can find a disk, $D_{\sf trans}'$, and a sub-disk, $D$, of $D_{\sf trans}'$ with the following properties: 
\begin{itemize}
\item
$D_{\sf trans}'$ is $C^0$ close to $D_{\sf trans}$.
\item 
$D$ is a standard overtwisted disk, i.e., its characteristic foliation contains exactly one elliptic singularity and ${\rm tb}(\partial D)=0$. 
\item
$\{B' \cap^+ D_{\sf trans}'\} = \{B' \cap^+ D_{\sf trans}\}$ 
and 
$|B' \cap^+ D| = 0$.
\item
$\{B' \cap^- D_{\sf trans}\} = \{ B' \cap^- D_{\sf trans}'\} =\{B' \cap^- D\}$ 
\end{itemize}
Here the third property follows from the condition $(B' \cap^{+} D_{\sf trans}) \subset A$. 
The fourth property follows from the condition $B' \cap  G_{--}(\cF(D_{\sf trans})) = \emptyset$. 
Though $D_{\sf trans}'$ may not admit an open book foliation this would not be a problem.
We have 
$$d(B)=d(B') \leq |B' \cap D| = |B' \cap^- D| = 
|B' \cap^- D_{\sf trans}| =
|B \cap^- D_{\sf trans}| = n(S, \phi).$$
Thus $d(B) \leq n(S,\phi)$.

Conversely, let $D$ be an overtwisted disk realizing $d(B)$, that is, $|B \cap D| = d(B)$. 
Taking the positive transverse push-off of the Legendrian boundary $\partial D$ we find a transverse unknot, $K$, with $sl(K)=1$. 
A spanning disk $D'$ of $K$ still intersects $B$ at $d(B)$ points. 
By Pavalescu's proof of Alexander theorem \cite[Theorem 3.2]{pav}, there is an isotopy preserving each page
and moving the non-braided parts of $K$ to neighborhoods of the binding. 
In the neighborhoods we can move $D'$ so that $K=\partial D'$ becomes a closed braid without introducing negative intersection points of $D'$ and $B$. 

Following the discussion in the proof of \cite[Theorem 4.3]{ik1-1}, from $D'$ we can construct a transverse overtwisted disk $D_{\sf trans}$ whose open book foliation has no more than $d(B)$ negative elliptic points, hence $n(S,\phi) \leq d(B)$.
\end{proof}

As a consequence of Proposition~\ref{prop:n} and Theorem~\ref{thm:d=n} we have the following  characterization of depth one bindings, which generalizes \cite[Theorem 5.2.3]{bo} (except for the part regarding the tension invariant). 

\begin{corollary}\label{corA}
Let $B$ be the binding of an open book $(S,\phi)$. Then $d(B)=1$ if and only if $\phi$ is not right-veering. 
\end{corollary}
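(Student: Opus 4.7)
The plan is to derive the corollary directly by combining Proposition~\ref{prop:n} with Theorem~\ref{thm:d=n}, treating the two directions separately and handling the tight case carefully (since $d(B)$ is only defined when the supported contact structure is overtwisted).

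For the forward direction, I would assume $d(B)=1$. Since depth is defined only in the overtwisted setting, this immediately forces $\xi_{(S,\phi)}$ to be overtwisted. Then Theorem~\ref{thm:d=n} gives $n(S,\phi)=d(B)=1$, and Proposition~\ref{prop:n}(2) concludes that $\phi$ is not right-veering.

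For the converse, suppose $\phi$ is not right-veering. By Proposition~\ref{prop:n}(1) the supported contact structure cannot be tight, so it is overtwisted, and Proposition~\ref{prop:n}(2) then gives $n(S,\phi)=1$. Applying Theorem~\ref{thm:d=n} yields $d(B)=n(S,\phi)=1$.

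There is really no hard step here; the corollary is essentially a dictionary translation between the two quantities $n(S,\phi)$ and $d(B)$, which has already been set up. The only subtlety worth flagging in the write-up is the asymmetry in how $d(B)$ behaves when $\phi$ is right-veering: either $\xi_{(S,\phi)}$ is tight (so $d(B)$ is undefined, or one declares it $\infty$), or $\xi_{(S,\phi)}$ is overtwisted with $\phi$ right-veering, in which case Proposition~\ref{prop:n}(3) forces $n(S,\phi)\geq 2$ and hence by Theorem~\ref{thm:d=n} also $d(B)\geq 2$. Either way $d(B)\ne 1$, which is the contrapositive of the forward implication.
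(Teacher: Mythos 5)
Your proof is correct and matches the paper's intent exactly: the paper states that Corollary~\ref{corA} is a direct consequence of Proposition~\ref{prop:n} and Theorem~\ref{thm:d=n}, and you carry out precisely that translation. Your attention to the edge case where $\xi_{(S,\phi)}$ is tight (so that $d(B)$ is not defined, or is taken to be infinite) is a worthwhile clarification that the paper leaves implicit.
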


Corollary \ref{corA} gives a construction of Legendrian or transverse knots and links with large depth (cf. \cite[Problems 6.1 and 6.4]{bo}).

\begin{corollary}\label{corB}
Let $B$ be the binding of an open book $(S,\phi)$ supporting an overtwisted contact structure.
Let $L$ be a Legendrian approximation of $B$. 
If $\phi$ is right-veering then $1 < d(B) \leq d(L)$. 
\end{corollary}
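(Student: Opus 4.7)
The plan is to prove the two inequalities in $1 < d(B) \leq d(L)$ separately.

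For the strict inequality $1 < d(B)$, I would appeal directly to the results already established in this section. By Theorem~\ref{thm:d=n}, $d(B) = n(S,\phi)$. Since $\xi_{(S,\phi)}$ is overtwisted and $\phi$ is right-veering by hypothesis, case (3) of Proposition~\ref{prop:n} applies, yielding $n(S,\phi) \geq 2$. Hence $d(B) \geq 2 > 1$, as desired. No open-book-foliation machinery is needed for this half beyond what is already packaged in Theorem~\ref{thm:d=n}.

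For $d(B) \leq d(L)$, the strategy is to reuse an overtwisted disk optimal for $L$ as a test disk for $B$. Let $D$ be an overtwisted disk realizing $d(L)$, so $|L \cap D| = d(L)$; after a generic perturbation I may assume these intersections are transverse and lie in the interior of $D$, and that $\partial D$ is disjoint from $L$. Since $L$ is a Legendrian approximation of $B$, the transverse push-off $B'$ of $L$ is transversely isotopic to $B$, and $d(\cdot)$ is a transverse invariant, so $d(B) = d(B')$. Choose a Darboux tubular neighborhood $N$ of $L$ small enough that $N$ is disjoint from $\partial D$ and $N \cap D$ consists of pairwise disjoint small disks, one centered at each point of $L \cap D$. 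Place $B'$ inside $N$ and arbitrarily $C^0$-close to $L$; a local computation in the standard model $\ker(dz - y\,dx)$ shows that $B'$ meets each component of $N \cap D$ in exactly one transverse point. Therefore $|B' \cap D| = |L \cap D| = d(L)$, and so $d(B) = d(B') \leq |B' \cap D| = d(L)$.

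The main obstacle is the careful intersection count inside the Darboux neighborhood of $L$. This is conceptually similar to the transverse isotopy trick $B \mapsto B'$ carried out already in the proof of Theorem~\ref{thm:d=n}: one must shrink $N$ enough to prevent any spurious intersections of $B'$ with $D$ outside the local model near the points of $L \cap D$. Everything else reduces to invoking previously established results.
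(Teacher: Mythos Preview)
Your proof is correct and is exactly the argument the paper has in mind: the strict inequality $1<d(B)$ is the content of Corollary~\ref{corA} (or equivalently Theorem~\ref{thm:d=n} combined with Proposition~\ref{prop:n}(3), as you write it), and the paper gives no further proof beyond stating the corollary and remarking that $d(B)\leq d(L)$ holds even without the right-veering hypothesis. Your Darboux-neighborhood argument for $d(B)\leq d(L)$ supplies the detail the paper omits; the only small point to be careful about is choosing the correct (positive) transverse push-off of $L$ so that $B'$ is indeed transversely isotopic to $B$.
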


The inequality $d(B) \leq d(L)$ holds even without the right-veering assumption.

In fact, there are several constructions of 
right-veering open books supporting overtwisted contact structures as listed below:
\begin{enumerate}
\item \cite[Proposition 6.1]{HKM}
Every open book can be made right-veering after a sequence of positive stabilizations. 
\item 
By Theorem \ref{thm2}, for a covering map $P:(\tilde M, \tilde \xi) \rightarrow (M,\xi)$ between a tight $(M,\xi)$ and an overtwisted $(\tilde M, \tilde \xi)$ with an open book $(S,\phi)$ supporting $(M,\xi)$, there exists an open book covering $\pi:(\tilde S, \tilde \phi) \rightarrow (S,\phi)$ compatible with $P$. 
By \cite[Theorem 1.1]{HKM} $\phi$ is right-veering and Lemma~\ref{lemma:coverrv} implies that $\tilde \phi$ is right-veering. 

Such a family of examples is discussed in Proposition~\ref{key-example} where $d(\tilde B)=2$.
\end{enumerate}

If the bindings of a open book is not connected then by further positive stabilizations, which preserve the right-veering property, we can always make the binding connected.
Hence it is fairly easy to construct a transverse or Legendrian knot with depth greater than $1$.

We point out that if an open book $(\tilde S, \tilde \phi)$ in 
the construction (2) is not destabilizable then it gives rise to an example of right-veering, non-destabilizable open book supporting an overtwisted contact structure.
The existence (or non-existence) of such open books is asked in \cite{HKM} and many examples have been found \cite{le, lis, ik1-2, kr}.

Presumably, under certain condition, open book coverings would provide non-destabilizable open books: 
In \cite{etl} it is shown that a right-veering open book $(S,\phi)$ is destabilizable if and only if the {\em translation distance} (see \cite{etl} for the definition) of $\phi$ is equal to one. 
Although the behavior of the translation distance under a covering operation is not clear, it is likely that if $\phi$ has a large translation distance then so does $\tilde\phi$, hence open book covering is non-destabilizable.




\section{Illustration of overtwisted coverings and a pants pattern}\label{sec4}

In this section we study a sequence of open books that supports overtwisted, virtually overtwisted tight and universally tight contact structures.

We begin with a proof of Proposition~\ref{key-example}. 

\begin{proof}
We prove the assertion (1). 
Applying the proof of Theorem 4.1 in \cite{ik1-2} we can construct a transverse overtwisted disk in the open book $(S, \phi)$. 
By the definition every transverse overtwisted disk has the self-linking number $1$, that is, the Bennequin-Eliashberg inequality \cite{E2} is violated. 
Thus $(S, \phi)$ supports an overtwisted contact structure. 

The assertion (3) follows from the same argument in Example 5.2 of \cite{EV}.  

Finally we prove the assertion (2).
By the lantern relation (see for example \cite[Proposition 5.1]{FM}) the mapping class $\phi$ can be written in the product of positive Dehn twists. 
Therefore, results of Giroux \cite{G} and  Eliashberg-Gromov \cite{EG} imply that $(M, \xi)$ is tight.

Below we consider the following four cases. We find a transverse overtwisted disk in an open book covering for each case.\\
(Case 1) $p-1\equiv q-1\equiv 1$ (mod $2$); \\
(Case 2) $p-1 \equiv q-1\equiv 0$ (mod $2$); \\
(Case 3) $p-1 \equiv 1$ and $q-1 \equiv 0$ (mod $2$);\\
(Case 4) $p-1 \equiv 0$ and $q-1 \equiv 1$ (mod $2$):

For each case we cut two copies of $S$ along the thick gray arcs as shown in Figure~\ref{cut}, then glue them along the cut arcs to get a connected surface $\tilde S$. 
Clearly $\tilde S$ is a double cover of $S$. 
We call the projection map $\Pi:\tilde S\to S$.  
(For Cases 3 and 4, the base space $S$ is disconnected after the cut but it is easy to verify that the covering space $\tilde S$ is connected.) 
\begin{figure}[htbp]
\begin{center}
\SetLabels
(.1*-.07) Case 1\\
(.04*.625) \scriptsize $1$\\
(.04*.56) \scriptsize $2$\\
(.025*.45) \scriptsize $q-1$\\
(.04*.39) \scriptsize $q$\\
(.37*-.07) Case 2\\
(.025*.11) \scriptsize $p-1$\\
(.3*.625) \scriptsize $1$\\
(.28*.45) \scriptsize $q-1$\\
(.28*.11) \scriptsize $p-1$\\
(.61*-.07) Case 3\\
(.57*.66) \scriptsize $1$\\
(.55*.48) \scriptsize $q-1$\\
(.55*.09) \scriptsize $p-1$\\
(.9*-.07) Case 4\\
(.83*.66) \scriptsize $1$\\
(.81*.48) \scriptsize $q-1$\\
(.81*.09) \scriptsize $p-1$\\
\endSetLabels
\strut\AffixLabels{\includegraphics*[height=55mm]{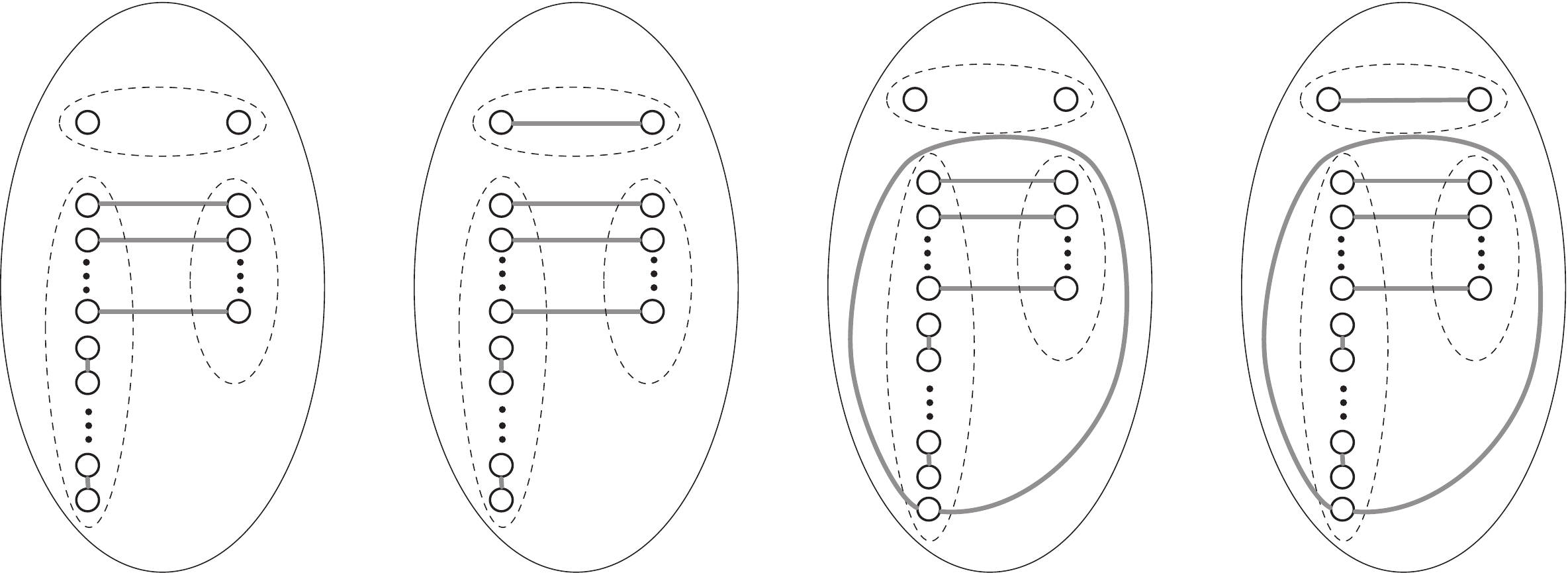}}
\caption{Cutting arcs (highlighted gray) in $S$ to construct $\tilde S$.}\label{cut}
\end{center}
\end{figure}
%

Choose base points $x_0 \in S$ and $\tilde x_0 \in \Pi^{-1}(x_0)$. 
Let $G$ be the index two subgroup of $\pi_{1}(S, x_0)$  defined by $G = \{\gamma \in \pi_{1}(S, x_0)\: | \: \langle [\gamma], [c]\rangle = 0\}$, where
$\langle -,- \rangle: H_{1}(S)\times H_{1}(S,\partial S) \rightarrow \Z_2$ is the mod $2$ algebraic intersection pairing and $[c] \in H_{1}(S,\partial S)$ is the relative homology class represented by the set of cutting arcs (with any choice of orientation). 
Note that the covering space $\Pi: \tilde{S} \to S$ has $\Pi_*(\pi_1(\tilde S, \tilde x_0))=G$. 
Since $\phi_*(G)=G$ there is a homeomorphism $\tilde\phi: \tilde S\to \tilde S$ such that $\tilde\phi(\tilde x_0)=\tilde x_0$ and $\Pi \circ \tilde\phi = \phi\circ\Pi$. 
We call $\tilde\phi$ a lift of $\phi$. 
For an arc $\tilde{\gamma}$ in $\tilde{S}$ the image $\tilde \phi(\tilde{\gamma})$ is nothing but the lift of the arc $\phi(\pi(\tilde{\gamma}))$ in $S$. This allows us to compute $\tilde \phi$ and one can check that $\tilde \phi$ fixes the boundary $\partial\tilde S$ pointwise.
In general it may be hard to write $\tilde\phi$ as the product of Dehn twists.

Figure~\ref{OTcover-odd2} (resp. Figure~\ref{OTcover-even2}) gives a movie presentation of a transverse overtwisted disk for Case 1 (resp. Case 2). 
For Case 3 and Case 4 combining the ideas of Figures~\ref{OTcover-odd2} and \ref{OTcover-even2} one can also find transverse overtwisted disks. 
We leave it to readers as an exercise. 
Therefore, the open book $(\tilde S, \tilde\phi)$ supports an overtwisted contact structure. 
\end{proof}

\begin{figure}[htbp]
\begin{center}
\SetLabels
(.5*.97) ($t=0$)\\
(.5*.93) Case 1\\
(.75*.925) \scriptsize $(-)$\\
(.5*.62) ($t=t_1$)\\
(.5*.29) ($t=t_2$)\\
\endSetLabels
\strut\AffixLabels{\includegraphics*[height=21cm]{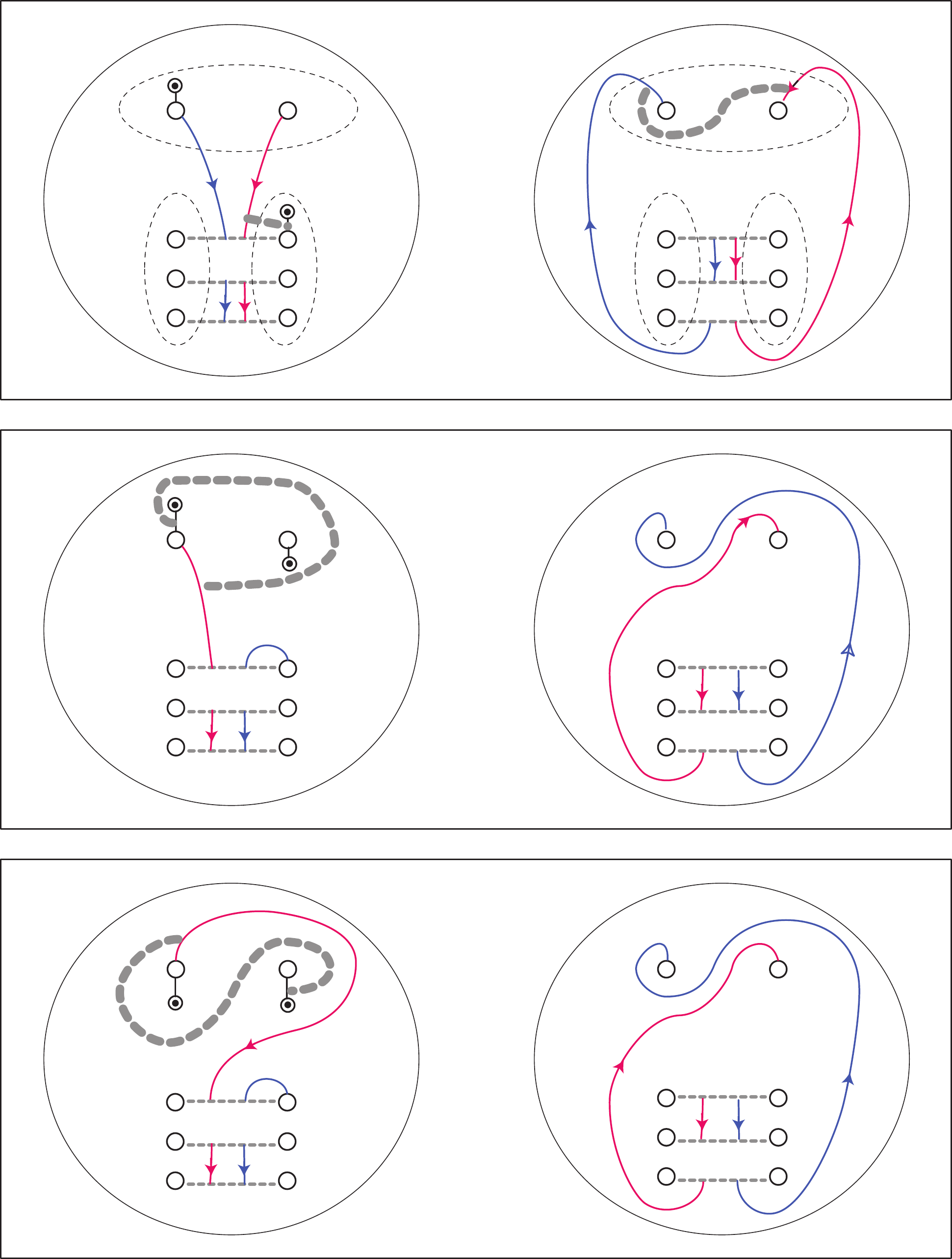}}
\end{center}
\end{figure}
\begin{figure}[htbp]
\begin{center}
\SetLabels
(.5*.95) ($t=t_3$)\\
(.5*.44) ($t=1$)\\
\endSetLabels
\strut\AffixLabels{\includegraphics*[height=13cm]{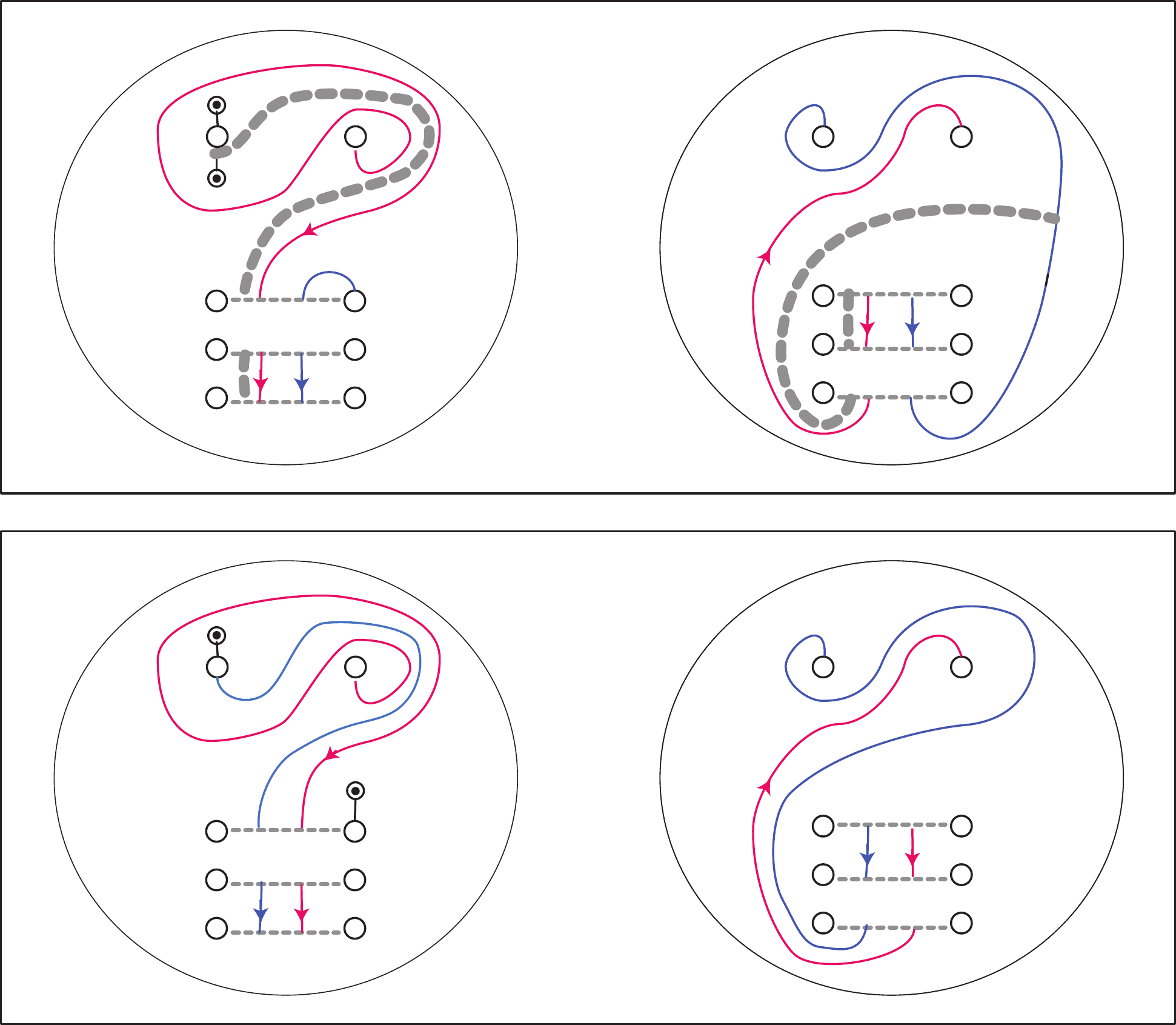}}
\caption{(Case 1, $p-1=q-1=3$): A movie presentation of a  transverse overtwisted disk. 
Each arrow indicates orientation of the b-arc. 
The starting (resp. ending) point of a b-arc is a positive (resp. negative) elliptic point. 
The end point of an a-arcs is marked with a $\odot$. 
This transverse overtwisted disk has two negative elliptic points and four positive elliptic points. 
Thick dashed arcs are describing arcs for hyperbolic points. 
The signs of the hyperbolic points are all positive except the one marked with $(-)$ in the page $t=0$. 
One can easily generalize this to any $p$ and $q$ with $p-1\equiv q-1\equiv 1$ (mod $2$).}
\label{OTcover-odd2}
\end{center}
\end{figure}
\begin{figure}[htbp]
\begin{center}
\SetLabels
(.5*.97) ($t=0$)\\
(.5*.93) Case 2\\
(.5*.62) ($t=t_1$)\\
(.5*.29) ($t=t_2$)\\
\endSetLabels
\strut\AffixLabels{\includegraphics*[height=21cm]{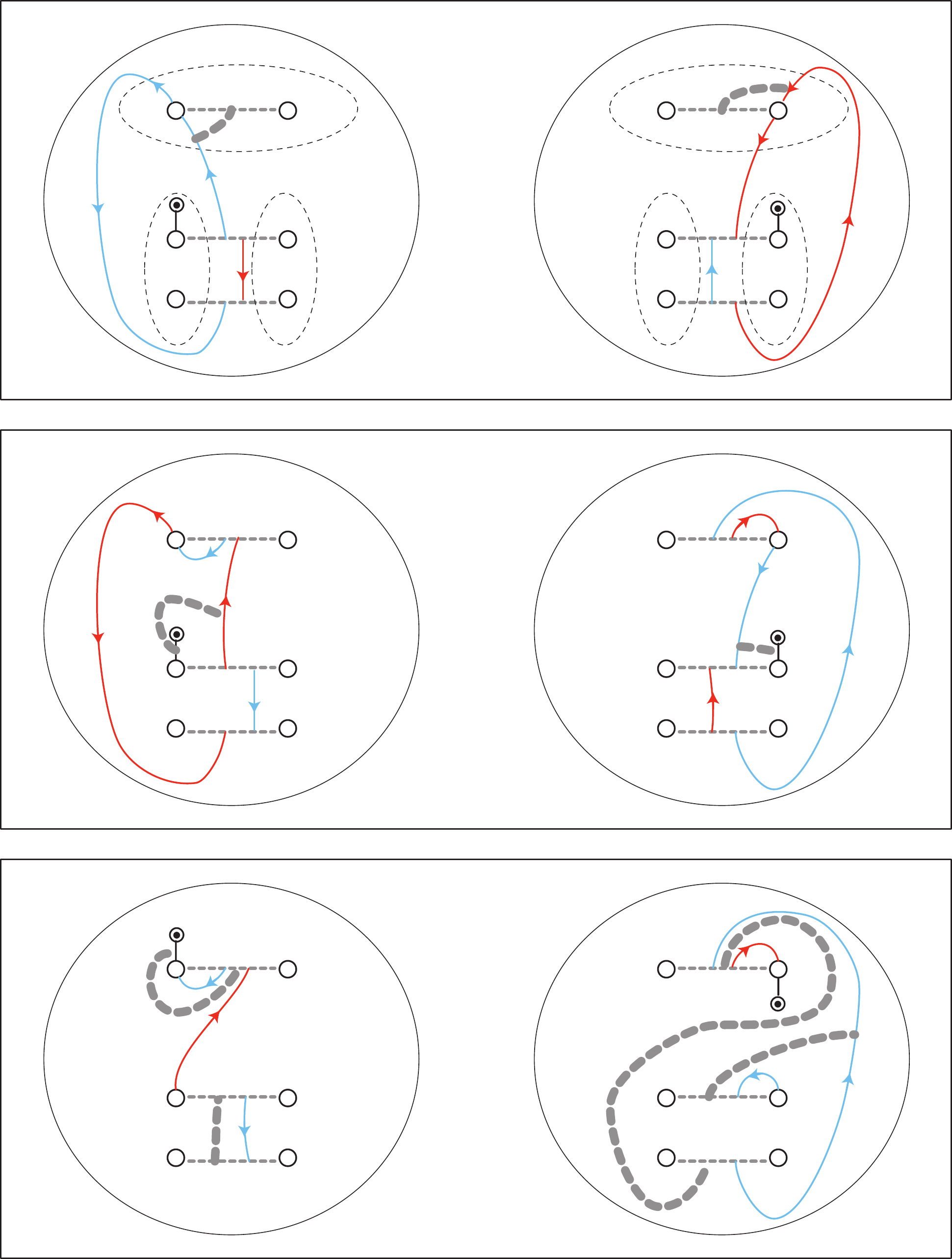}}
\end{center}
\end{figure}
\begin{figure}[htbp]
\begin{center}
\SetLabels
(.5*.95) ($t=t_3$)\\
(.5*.44) ($t=1$)\\
\endSetLabels
\strut\AffixLabels{\includegraphics*[height=13cm]{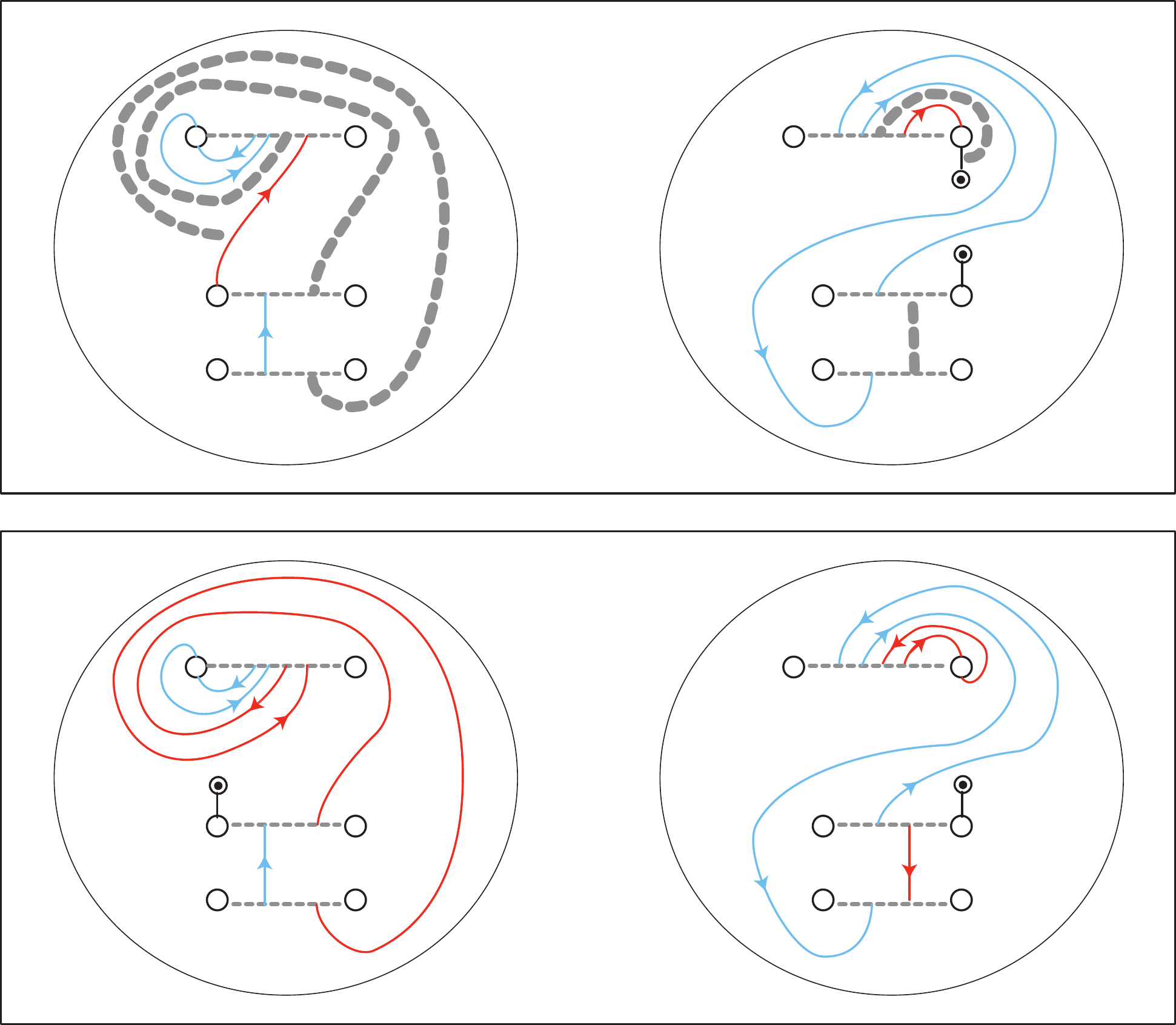}}
\caption{(Case 2, $p-1=q-1=2$): A movie presentation of a  transverse overtwisted disk. 
This transverse overtwisted disk has two negative elliptic points and four positive elliptic points.  
The signs of the hyperbolic points are all positive except the one in the page $t=0$. 
One can 
generalize this to any $p$ and $q$ with $p-1\equiv q-1\equiv 0$ (mod $2$)}
\label{OTcover-even2}
\end{center}
\end{figure}

\pagebreak

\begin{remark}
Let $K_{p,q} \subset (S^3, \xi_{st})$  be a Legendrian unknot in the standard contact $3$-sphere with the Thurston-Bennequin number ${\rm tb}(K_{p,q})=-(p+q)+1$ and the rotation number ${\rm rot}(K_{p,q})=p-q$  or $q-p$, where $p, q \geq 2$. 
See Figure~\ref{FrontProj}. 
\begin{figure}[htbp]
\begin{center}
\SetLabels
(0*.8) $p$\\
(1*.8) $q$\\
(0*.17) $1$\\
(0*.3) $2$\\
(1*.17) $1$\\
(1*.3) $2$\\
\endSetLabels
\strut\AffixLabels{\includegraphics*[height=30mm]{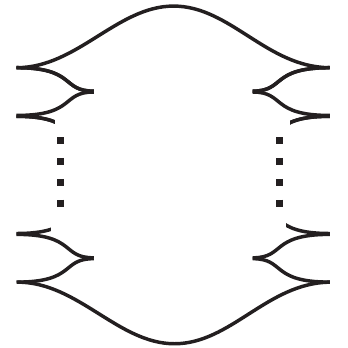}}
\caption{The front projection of $K_{p,q}$.}\label{FrontProj}
\end{center}
\end{figure}

Let $(M, \xi)$ denote the contact structure obtained from $(S^3, \xi_{st})$ by the Legendrian surgery along $K_{p,q}$. 
When $n=-1$ in Proposition~\ref{key-example} we can verify that the open book $(S, \phi)$ supports the contact manifold $(M, \xi)$ applying Sch\"onenberger's algorithm \cite{S} and the lantern relation. 

With this identification of $(S, \phi)$ and $(M, \xi)$ the assertion (2) of Proposition~\ref{key-example}  
can also be proved applying Gompf's criterion of virtually overtwisted contact structures \cite{Go}. 

Lastly, we note that if $p=1$ or $q=1$ then $\xi$ is known to be universally tight due to Honda \cite{H} and Giroux \cite{G1}. 
\end{remark}

\begin{remark}
Under the projection $\pi:\tilde S_t \to S_t$ for each page we can see how the transverse overtwisted disk is `folded' (in other words, self-intersecting) in the base tight manifold $(M, \xi)$. 
For example when $t=t_1$ of Case 1 in Figure~\ref{OTcover-odd2} the projected image of the transverse overtwisted disk has two intersection points marked with black dots as in Figure~\ref{projection}.
\begin{figure}[htbp]
\begin{center}
\SetLabels
(1*0) page $S_{t_1}$\\
\endSetLabels
\strut\AffixLabels{\includegraphics*[height=35mm]{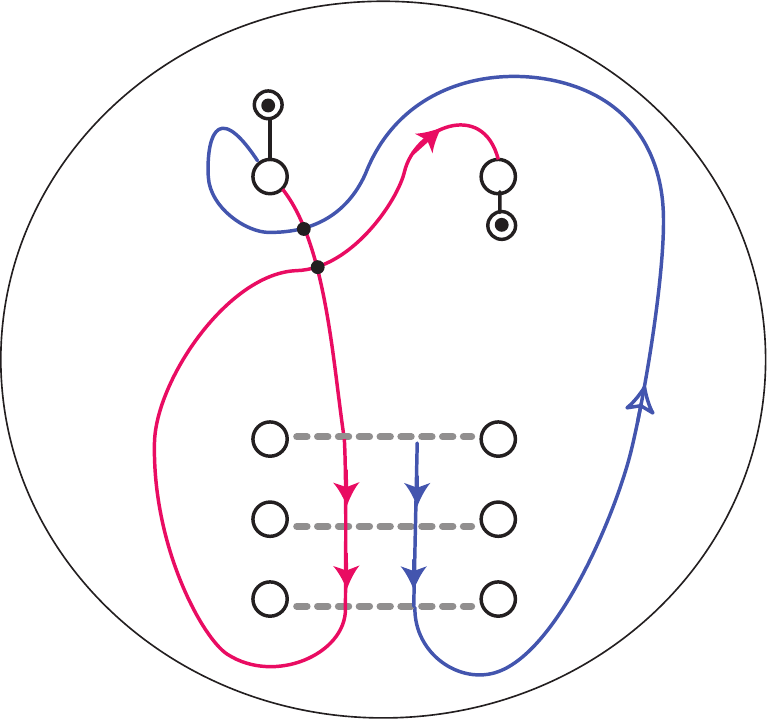}}
\caption{Self-intersection points of the transverse overtwisted disk (of Figure~\ref{OTcover-odd2}) under the projection $\pi:\tilde S_t \to S_t$. }\label{projection}
\end{center}
\end{figure} 
\end{remark}

One can generalize the construction of overtwisted disks to a $k$-fold cover using the same cutting arcs of Figure~\ref{cut}. 
For example: 

\begin{example}\label{ex1}
Let $S$ be a $2$-sphere with four holes. Let $a,b,c,d,e \subset S$ be simple closed curves parallel to the boundary as shown in Figure~\ref{4psphere}.
Let 
$$\Phi_{\alpha, \beta} = T_{a}^{\alpha+1}T_{b}^{\beta+1}T_{c}T_{d}T_{e}^{-1}$$
Suppose that $\alpha, \beta > 0$ and there exists a number $k\geq 2$ that divides both $\alpha+1$ and $\beta+1$.
Then there exists a $k$-fold cover of $(S, \Phi_{\alpha, \beta})$ that supports an overtwisted contact structure, i.e., $(S, \Phi_{\alpha, \beta})$ supports a  virtually overtwisted tight contact structure. 
\begin{figure}[htbp]
\begin{center}
\SetLabels
\endSetLabels
\strut\AffixLabels{\includegraphics*[height=35mm]{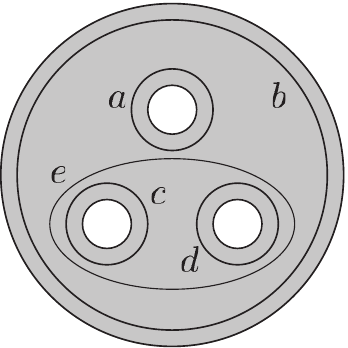}}
\caption{The surface $S$.}\label{4psphere}
\end{center}
\end{figure}
\end{example}

So far we have only seen planar open books. In fact  our example can be applied to higher genus open books: 
Suppose that an open book $(S', \Psi)$ supports a tight contact structure, the Nielsen-Thurston type of $\phi$ is reducible, and  `containing' $(S, \Phi_{\alpha, \beta})$ as a subspace, that is, $S \subset S'$ and $\Psi|_S = \Phi_{\alpha, \beta}$. 
Then $(S', \Psi)$ supports a virtually overtwisted contact structure.

\begin{example} 
Let $S'$ be a genus 4 surface with two holes, $d$ and $c$, see Figure~\ref{subspace}. 
Let $\Psi= T_{a}^{2}T_{b}^{2}T_{c}T_{d}T_{e}^{-1} T_f$ be a diffeomorphism of $S'$.  
The open book $(S', \Psi)$ contains $(S, \Phi_{1,1})$ 
of Example~\ref{ex1} and $(S', \Psi)$ supports a tight contact structure. 
Take a double cover $\tilde S'$ of $S'$ tjat os a genus 7 surface with four boundary components, $\tilde c, \tilde d, \tilde c', \tilde d'$. 
The monodromy $\Psi$ lifts to a diffeomorphism $\tilde\Psi= T_{\tilde a}T_{\tilde b}T_{\tilde c}T_{\tilde d}T_{\tilde e}^{-1} T_{\tilde f}T_{\tilde c'}T_{\tilde d'}T_{\tilde e'}^{-1} T_{\tilde f'}$ of $\tilde S'$.  
Example~\ref{ex1} guarantees that the open book $(\tilde S', \tilde\Psi)$ supports an overtwisted contact structure. 
\end{example}
\begin{figure}[htbp]
\begin{center}
\SetLabels
(.55*.1) $S$\\
(.1*.1) $S'$\\
(-.1*.8) $\tilde{S'}$\\
(.5*.29) $b$\\
(.6*.05) $a$\\
(.6*.2) $e$\\
(.65*.14) $c$\\
(.65*.22) $d$\\
(.8*.15) $f$\\
(.76*.8) $\tilde d$\\
(.76*.7) $\tilde c$\\
(.8*.75) $\tilde e$\\
(.5*.9) $\tilde b$\\
(.5*.57) $\tilde a$\\
(.22*.8) $\tilde c'$\\
(.22*.7) $\tilde d'$\\
(.2*.75) $\tilde e'$\\
(.9*.7) $\tilde f$\\
(.1*.7) $\tilde f'$\\
\endSetLabels
\strut\AffixLabels{\includegraphics*[height=80mm]{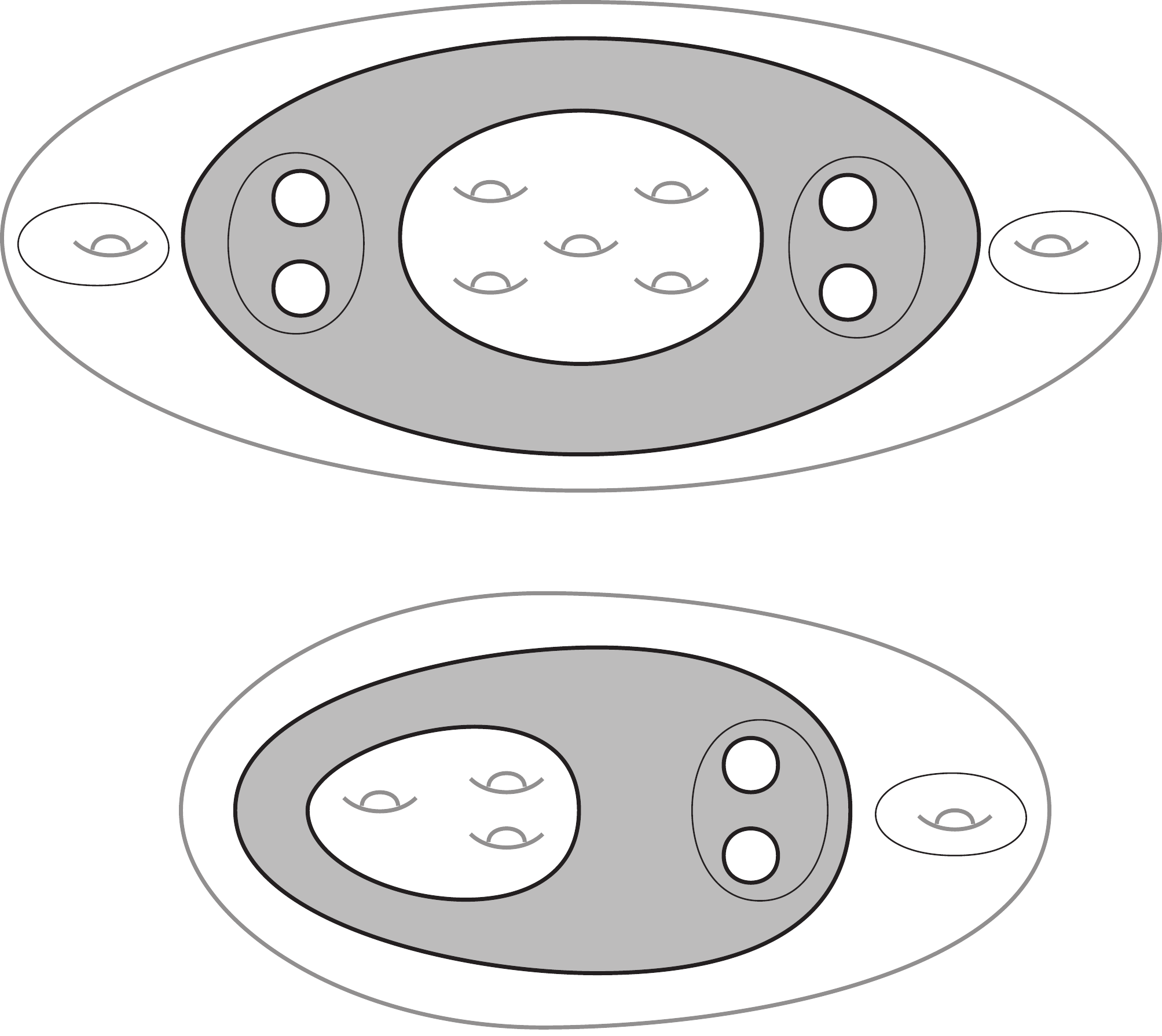}}
\caption{(Top) A genus 7 surface $\tilde S'$ with four holes.  (Bottom) An genus 4 surface $S'$ with two holes.}
\label{subspace}
\end{center}
\end{figure} 

\begin{remark}
Lemma \ref{lemma:coverrv} and the discussion in  Section~\ref{sec3} imply that if $(S, \phi)$ is a virtually overtwisted contact structure then its overtwisted cover has the overtwisted complexity (see Section~\ref{sec3}) $n(\tilde S, \tilde\phi) \geq 2$. 
We notice that all the examples of virtually overtwisted open books $(S, \phi)$ we study in this note have $n(\tilde S, \tilde\phi) = 2$.

Moreover, these open books $(S, \phi)$ all contain a pants region $P \subset S$ (see Figure~\ref{pants}) with the following properties 
\begin{itemize}
\item $P$ is bounded by curves $x, y, z$ with $x, y \subset \partial S$ and $z \subset {\rm Int}(S)$,
\item the monodromy $\phi$ preserves $P$ and $\phi|_P = T_x T_y {T_z}^{-1}$. 
\end{itemize}
 (The curve $z$ corresponds to $\alpha$ of Figure~\ref{sphere} and $e$ of Figure~\ref{4psphere}). 
Such a pants region $P$ plays a crucial role in our construction of transverse overtwisted disks  
because the two negative elliptic points of each transverse overtwisted disk lie on the lifts of $x$ and $y$. 
\begin{figure}[htbp]
\begin{center}
\SetLabels
(.23*.7) $x$\\
(.75*.7) $y$\\
(.5*.9) $z$\\
\endSetLabels
\strut\AffixLabels{\includegraphics*[height=20mm]{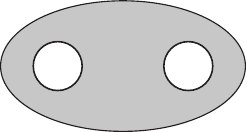}}
\caption{Pants region $P$.}
\label{pants}
\end{center}
\end{figure}
\end{remark}

\noindent{\bf Question:} 
Do there exist open book patterns, like the above pants pattern, that give virtually overtwisted contact structures? 


\section*{Acknowledgement}

The authors would like to thank John Etnyre for useful conversation on Theorem~\ref{thm2}, and John Etnyre, Jeremy Van Horn-Morris and Amey Kaloti for Proposition 4.1-(3). 
They also thank the referee for numerous comments that helped improving the paper significantly.   
TI was partially supported by JSPS KAKENHI
Grant Numbers 25887030 and 15K17540.
KK was partially supported by NSF grant DMS-1206770.


\begin{thebibliography}{[99]}
\bibitem{bo} K. Baker and S. Onaran, 
{\em Nonlooseness of nonloose knots.} 
Algebr. Geom. Topol. 15 (2015), no. 2, 1031-1066.
 
\bibitem{FM}B. Farb and D. Margalit,
{\em A primer on mapping class groups}. 
Princeton Mathematical Series, 49. Princeton University Press, Princeton, NJ, 2012.

\bibitem{EG}Y. Eliashberg and M. Gromov, 
{\em Convex symplectic manifolds. Several complex variables and complex geometry, Part 2} (Santa Cruz, CA, 1989), 135-162, Proc. Sympos. Pure Math., 52, Part 2, Amer. Math. Soc., Providence, RI, 1991.

\bibitem{E}Y. Eliashberg, 
{\em Classification of overtwisted contact structures on 3-manifolds.} 
Invent. Math. 98 (1989), no. 3, 623-637. 

\bibitem{E2}Y. Eliashberg, 
{\em Contact 3-manifolds twenty years since J Martinet's work,}
Ann. Inst. Fourier (Grenoble) 42 (1992) 165-192. 

\bibitem{Et}J. Etnyre,
{\em Lectures on open book decompositions and contact structures.} 
Floer homology, gauge theory, and low-dimensional topology, 103-141, Clay Math. Proc., 5, Amer. Math. Soc., Providence, RI, 2006.



\bibitem{EV}J. Etnyre and J. V. Horn-Morris,
{\em Monoids in the mapping class group.}
arXiv:1504.02106v1. 

\bibitem{etl} J. Etnyre and Y. Li,
{\em The arc complex and contact geometry: nondestabilizable planar open book decompositions of the tight contact 3-Sphere},
Int Math Res Notices (2015) 2015 (5), 1401-1420.

\bibitem{Gconvex}E. Giroux,
{\em Convexit\'e en topologie de contact}, 
Comment. Math. Helv. 66 (1991), no. 4, 637-677. 

\bibitem{G1}E. Giroux, 
{\em Structures de contact en dimension trois et bifurcations des feuilletages de surfaces} 
Invent. Math., 141, no. 3, (2000) 615-689. 

\bibitem{G}E. Giroux,
{\em G\'eom\'etrie de contact: de la dimension trois vers les dimensions sup\'erieures},
Proceedings of the International Congress of Mathematicians, Vol. II (Beijing, 2002), 405-414, Higher Ed. Press, Beijing, 2002.

\bibitem{Go}R. Gompf, 
{\em Handlebody construction of Stein surfaces.} 
Ann. of Math. (2) 148 (1998), no. 2, 619-693. 

\bibitem{H}
K. Honda, 
{\em On the classification of tight contact structures. I}. Geom. Topol. 4 (2000) 309-368. 

\bibitem{HKM}K. Honda, W. Kazez, and G. Mati\'c, 
{\em Right-veering diffeomorphisms of compact surfaces with boundary.} 
Invent. Math. 169 (2007), no. 2, 427-449. 


\bibitem{ik1-1}T. Ito and K. Kawamuro, 
{\em Open book foliation}. 
Geom. Topol. 18 (2014), no. 3, 1581-1634.

\bibitem{ik1-2}T. Ito and K. Kawamuro,  
{\em Visualizing overtwisted discs in open books}, 
Publ. Res. Inst. Math. Sci., 50 (2014) 169-180.

\bibitem{ik2}T. Ito and K. Kawamuro, 
{\em Essential open book foliations and fractional Dehn twist coefficient},
Preprint.  

\bibitem{ik3} T. Ito and K. Kawamuro, 
{\em Operations on open book foliations},
Algebr. Geom. Topol. 14 (2014) 2983-3020. 

\bibitem{kr} W. Kazez and R. Roberts,
{\em Fractional Dehn twists in knot theory and contact topology},
Algebr. Geom. Topol. 13 (2013) 3603-3637.


\bibitem{le} Y. Lekili,
{\em Planar open books with four binding components},
Algebr. Geom. Topol. 11 (2011) 909-928.

\bibitem{lis} P. Lisca,
{\em On overtwisted, right-veering open books},
Pacific J. Math. 257 (2012), No. 1, 219-225.


\bibitem{pav} E. Pavelescu,
{\em Braiding knots in contact 3-manifolds,}
Pacific J. Math., 253 (2011), 475-487.

\bibitem{S}S. Sch\"onenberger,
{\em Determining symplectic fillings from planar open books}.
J. of Sympl. Geom. Vol. 5, no. 1, (2007), 19-41. 

\end{thebibliography}
\end{document}